\newtheorem{theorem}{Theorem}[section]
\newtheorem{lemma}[theorem]{Lemma}
\newtheorem{corollary}[theorem]{Corollary}
\newtheorem{proposition}[theorem]{Proposition}
\theoremstyle{definition}
\newtheorem{definition}[theorem]{Definition}
\newtheorem{remark}[theorem]{Remark}
\newtheorem{example}[theorem]{Example}
\newtheorem{notation}{Notation}
\newcommand{\free}{{\rm Free}}
\def\pr{\mathbb{P}}
\def\prev{\mathbb{P}}
\newcommand{\CA}{\mathcal{C}({\bf A})}
\newcommand{\no}[1]{\widebar{#1}}
\newcommand{\at}{{\rm at}}
\newcommand{\TA}{\mathcal{T}({\bf A})}
\title{Compound conditionals as random quantities and Boolean algebras}
\author{Tommaso Flaminio$^1$, 
Angelo Gilio$^2$, 
Lluis Godo$^1$, \\
Giuseppe Sanfilippo$^3$ \vspace{1cm} \\
%
%
$^1$ Artificial Intelligence Research Institute (IIIA), CSIC \\ 08193 Bellaterra, Spain \\
{\tt \{tommaso,godo\}@iiia.csic.es} \\
$^2$ Dept. SBAI, University of Rome ``La Sapienza'', \\ Rome, Italy (retired)\\
{\tt angelo.gilio@sbai.uniroma1.it}\\
$^3$ Dept. Mathematics and Computer Science, University of Palermo, \\ Palermo, Italy \\
{\tt giuseppe.sanfilippo@unipa.it}
}
\date{} 
\begin{document}

\maketitle              

\maketitle

\begin{abstract}
 Conditionals play a key role in different areas of logic and probabilistic reasoning, and they have been studied and formalised from different angles. In this paper we focus on the de Finetti's notion of conditional as a three-valued object, with betting-based semantics, and its related approach as random quantity as mainly developed by two of the authors. Compound conditionals have  been studied in the literature, but not in full generality. In this paper we provide a natural procedure to explicitly attach  conditional random quantities to arbitrary  compound conditionals that also allows us to compute their previsions. By studying the properties of these random quantities, we show that, in fact, the set of compound conditionals can be endowed with a Boolean algebraic structure. In doing so, we pave the way to build a bridge between the long standing tradition of three-valued conditionals and a more recent proposal of looking at conditionals as elements from suitable Boolean algebras. 
\end{abstract}

\section{Introduction}

Conditional expressions are pivotal in representing knowledge and reasoning abilities of intelligent agents. Conditional reasoning features in a wide range of areas spanning non-monotonic reasoning  \cite{adams75,DuboisP94,benferhat97,Isberner2001,gilio02,gilio13,BeierleEKK18}, causal inference \cite{Halpern2,Rooij}, and more generally reasoning under uncertainty \cite{Halpern2003,coletti02,PfSa17,SPOG18} or  conditional preferences \cite{Ghirardato,Vantaggi2010,ColettiPV19}.
 Bruno de Finetti was one of the first who put forward an analysis of conditionals beyond the realm of conditional probability theory arguing that they cannot be described within the bounds of classical logic 
\cite{deFi35,definetti37-2}. He expressed this by referring to them as {\em trievents}: a conditional, denoted as $(a\mid b)$, is a basic object to be read ``$a$ given $b$'' that turns out to be true if both $a$ and $b$ are true, false if $a$ is false and $b$ is true, and {\em void} if $b$ is false.


The vast literature on conditionals also includes the study of {\em compound} conditionals, that is to say, those expressions obtained by combining basic conditionals like $(a\mid b)$ by usual logical operations of ``and'', ``or'', ``negation'' and so forth, see, e.g.,  \cite{Scha68,Cala87,McGe89,GoNW91,Jeff91,edgington95,Miln97,NgWa94,StJe94,Kauf09,CiDu13}.

In this line, and based on de Finetti's original conception, \cite{GiSa13a,GiSa14,GiSa19,GiSa20,GiSa21A,GiSa21} propose  and develop  an approach to interpret both basic and compound conditionals as random quantities. 
This approach has been proved to allow for a suitable {\em numerical representation} of conditionals and some families of compound conditional expressions. 
Indeed, as we will recall in Section \ref{secPreliminaries}, starting from trivents $(a\mid b)$ regarded as random quantities 
taking values in a three-element set $\{0,1,x\}$, where $x$ is a real value in $[0, 1]$ representing a conditional probability, one can extend such representation also to cover more complex conditional formulas.


An alternative, more logically-oriented approach to 
conditionals has been put forward in e.g.\ \cite{vanFraassen1976261} or \cite{Goodman}'s Conditional Event algebras,  and more recently in \cite{FlGH20} in a finitary context. These papers  formalise conditional expressions in an algebraic setting, and therefore  provide a {\em symbolic representation} of them. In this approach, as it is common in logico-algebraic representations, neither basic expressions $(a\mid b)$ nor compound conditional formulas necessarily have a numerical counterpart, as their interpretation remains at the symbolic level. However, this does not forbid, as it is also shown in \cite{FlGH20}, to consider for instance a fully compatible  probabilistic layer on top of it.  

In the present paper we will put forward an analysis that  takes inspiration from both of the above settings and proposes a numerical representation of conditionals that, in addition, also allows for a logico-symbolic representation. 
In particular, we will:
\begin{itemize}
    \item present a natural and uniform procedure to interpret compound conditionals as random quantities;
    \item investigate the numerical and logical properties of such a representation for compound conditionals via their associated random quantities; and
    \item prove that, at the logical level, the present approach, leads to the same results of \cite{FlGH20}, where the authors showed that compound symbolic conditionals can be endowed with  a Boolean algebra structure.
\end{itemize}
Let us further remark that the apparent contradiction between the  perspective that looks at three-valued conditionals as numerical random quantities as done, e.g., in \cite{GiSa14,GiSa21A}, and the Boolean algebraic perspective on conditionals used in \cite{FlGH20} to reason about them, actually dissolves once we precisely set at which level the numerical and the symbolic representation intervene. 

The present paper is organized as follows. In the next Section \ref{secPreliminaries} we will recall in some more details the original approach of \cite{deFi35} (Subsection \ref{SEC:DEFIN}) and the one that followed of \cite{GiSa14} (Subsection \ref{sec22}). 
The interpretation of compound conditionals in terms of conditional random quantities will be the subject of Section \ref{secDefinitions}. There we will also provide examples in order for our basic construction to be clear. In Section \ref{secProperties} we will then prove numerical and also logical properties of the random quantities that represent compound conditionals, and in Section \ref{secProb} we will examine some probabilistic aspects. The comparison between the algebras arising from the random quantities studied here and the Boolean algebras of conditionals of \cite{FlGH20} will be the topic of Section \ref{secComparison}, where we  prove that they
are 
indeed isomorphic. In the last Section \ref{SecConclusion} we will gather some conclusions and remarks about future work on this subject.

\section{Some preliminary comments on conjoined conditionals}\label{secPreliminaries}
In this section, in order to better understand the formalism and results of this paper, we recall some notions given in the approach by Gilio-Sanfilippo, and their relation with the notions and results given here.

We will denote here events by capital letters, such as $A,B,H,K,\ldots $.
Moreover, we denote random quantities by capital letters, such as $X,Y,Z\ldots$. In particular we will denote by $X_{A}$ the indicator function of the event $A$. 
 
 \subsection{The conditional prevision in the approach of de Finetti} \label{SEC:DEFIN}

Given a random quantity $Z$ and an event $H \neq \bot$, in the approach of de Finetti (1935) the conditional prevision $\pr(Z|H)$ can be assessed by applying the following conditional bet: 
\begin{itemize}
    \item[1)]  you are asked to assess the value $z=\pr(Z|H)$, by knowing that if $H$ is true then the bet is in effect;
    \item[2)] if the bet has effect, then you pay $z$ and you receive the random amount $Z$; otherwise, if $H$ is false, the bet is null; 
    \item[3)] the checking of coherence for the assessment $z=\pr(Z|H)$ is made by only considering the cases where the bet has effect.
\end{itemize}
We observe that for the random quantity $Z{\cdot} X_H + y X_{\no{H}}$,  by linearity of the prevision,  it holds that, for all $y$,
\[
\begin{array}{l}
\pr(Z{\cdot} X_H + y X_{\no{H}} \mid H)=\pr(Z{\cdot}X_H \mid H)+yP(\no{H}\mid H)\\=\pr(Z\mid H)=z.
\end{array} 
\]
In particular, for $y=z$, one has 	$\pr(Z X_H + z X_{\no{H}} \mid H)=\prev(Z \mid H)=z$.
		 Moreover, for all $y$, 
$$
\begin{array}{l}
\pr(Z{\cdot}X_H + y X_{\no{H}}) =
 \pr(Z{\cdot}X_H) + y P(\no{H}) = \\ =\pr(Z \mid H)P(H) + y P(\no{H}) = zP(H) + y P(\no{H}) \,, 
\end{array}
$$
and hence,  for $z = y$, we have
$$
\begin{array}{l}
\pr(Z{\cdot}X_H + z X_{\no{H}}) = zP(H) + z P(\no{H}) = z. 
\end{array}
$$
In other words, we have the following equalities: 
\begin{equation}\label{EQ:PREVITER}
\pr(Z{\cdot}X_H + z X_{\no{H}})=\pr(Z{\cdot}X_H + z X_{\no{H}} \mid H)=\pr(Z\mid H)=z.	
\end{equation}	
Now, consider  the random quantity $Z{\cdot}X_H + y X_{\no{H}}$, under the assumptions that: 
\begin{itemize}
\item[(i)] $y=\pr(Z{\cdot}X_H + y X_{\no{H}})$;
\item[(ii)]  in order to check the coherence of the assessment $y$, we discard the cases where you receive back the paid amount $y$ (whatever $y$ be), that is we discard the case where $H$ is false (bet called off).
\end{itemize}
Of course,  $y=z$  satisfies both $(i)$ and $(ii)$.
\begin{itemize}
\item[] {\em Question: is it coherent to assess $y \neq z$?} 
\end{itemize}
We  show below that the  answer is NO. \\ 
Indeed, if we make a bet on the random quantity 
\[
(Z{\cdot}X_H + z X_{\no{H}}) - (Z{\cdot}X_H + y X_{\no{H}}) = (z-y)X_{\no{H}},
\]
we should agree to pay $z-y$ by receiving $(z-y)X_{\no{H}} \in \{0, z-y\}$. As when   $H$ is false we receive back the paid amount $z-y$ (whatever $z-y$ be), this case must be discarded for checking the coherence of the assessment $z-y$. 
Then, we should pay $z-y$ by knowing that (when the bet is not called off) we receive 0. Therefore, by coherence,   $z-y=0$, that is $y=z=\pr(Z|H)$. 

In other words, in order to assess the conditional prevision $\pr(Z|H)$, it is equivalent to assess the value $z$ such that $z=\pr(Z{\cdot} X_H+zX_{\no{H}})$; then,
 we can define the conditional random quantity $Z|H$  as 
\begin{equation}\label{EQ:ZgH}
Z|H=ZX_H+zX_{\no{H}}, \text{ where } z=\prev(ZX_H+zX_{\no{H}}).
\end{equation}
In this way we can look at $z$ both as the conditional prevision $\prev(Z|H)$ \emph{\`a la de Finetti},  and as the prevision of  the conditional random quantity  $Z|H=ZX_H+zX_{\no{H}}$. In particular, given two events $A$ and $H \neq \emptyset$, if $x=\pr(X_{AH} + x X_{\no{H}})$, then $x=P(A|H)$ and hence
\begin{equation}\label{EQ:INDICATORAgH}
X_{A|H} =X_{AH} + P(A|H) X_{\no{H}}. 
\end{equation}

A final remark:  from (\ref{EQ:PREVITER}) it holds that
\begin{equation}\label{EQ:EQ:PREVITERBIS}
\prev[(Z|H)|H]=\pr(ZX_H + z X_{\no{H}} \mid H)=\prev(Z|H)=z;
\end{equation}
then we formally obtain 
$$
\begin{array}{l}
(Z|H)|H= \\ = (ZX_H+zX_{\no{H}}) X_H+\pr(Z X_H + z X_{\no{H}}\mid H) X_{\no{H}}=\\ =ZX_H+z X_{\no{H}}=Z|H.
\end{array}
$$

\subsection{The conjunction in the approach by Gilio $\&$ Sanfilippo}\label{sec22}
We recall that in the approach by Gilio and Sanfilippo the compound conditionals, like conjunctions and disjunctions, are defined as conditional random quantities, in the setting of coherence, see e.g. \cite{GiSa14,GiSa19,GiSa21}. In this section, in order to make explicit the numerical aspects, we recall these notions by using the notations of the current paper. Then, the indicator of an event $A$, or a conditional event $A|H$ is denoted (not by the same symbol, but) by $X_A$, or $X_{(A|H)}$, respectively. Likewise, the conjunction $A|H\wedge B|K$ is denoted by $X_{(A|H)\wedge(B|K)}$, and so on.

 In the setting of coherence, given a probability assessment $P(A|H)=x, P(B|K)=y$, the conjunction of  $A|H$ and $B|K$ is defined as 
\begin{equation}\label{EQ:CONG}
X_{(A|H) \wedge (B|K)} = \left\{
\begin{array}{ll}
1, & \mbox{if $AHBK$ is true,} \\
0, & \mbox{if $\no{A}H {\vee} \no{B}K$ is true,} \\
x, & \mbox{if $\no{H}BK$ is true,} \\
y, & \mbox{if $AH\no{K}$ is true,} \\
z, & \mbox{if $\no{H}\no{K}$ is true,} \\
\end{array}
\right.
\end{equation}
where $z$ is the prevision of $X_{(A|H) \wedge (B|K)}$, which (in the framework of the betting scheme) is the amount to be paid in order to receive the random amount $X_{(A|H) \wedge (B|K)}$. 
We observe that 
\begin{equation}\label{eqfirstConj}
X_{(A|H) \wedge (B|K)} = X_{AHBK} + x X_{\no{H}BK} + y X_{AH\no{K}} + z X_{\no{H}\no{K}
} \,,\;
\end{equation}
where $z=\prev( X_{AHBK} + x X_{\no{H}BK} + y X_{AH\no{K}} + z X_{\no{H}\no{K}})$.
Then, by applying (\ref{EQ:ZgH}), where  $H$ is replaced by $H{\vee} K$ and  $Z$ is replaced by $X_{AHBK} + x X_{\no{H}BK} + y X_{AH\no{K}}$, we have 
$$
\begin{array}{l}
Z\cdot X_{H\vee K}= (X_{AHBK} + x X_{\no{H}BK} + y X_{AH\no{K}}) X_{H \vee K}=\\
= X_{AHBK} + x X_{\no{H}BK} + y X_{AH\no{K}},
\end{array}
$$
and it follows that 
\[
\begin{array}{ll}
X_{AHBK} + x X_{\no{H}BK} + y X_{AH\no{K}} + z X_{\no{H}\no{K}}
= \\ =(X_{AHBK} + x X_{\no{H}BK} + y X_{AH\no{K}})|(H{\vee} K).
\end{array}
\]
Hence, from (\ref{eqfirstConj}) we obtain
\[
X_{(A|H) \wedge (B|K)} =(X_{AHBK} + x X_{\no{H}BK} + y X_{AH\no{K}})|(H\vee K),
\]
with 
\begin{equation}\label{EQ:CONDPREV}
	\begin{array}{l}
z=\prev[(X_{AHBK} + x X_{\no{H}BK} + y X_{AH\no{K}}) \mid H{\vee} K)=\\
\quad =P(AHBK \mid H {\vee} K)+xP(\no{H}BK\mid H{\vee} K) \; + \\ \qquad + \; yP(AH\no{K}\mid H{\vee} K).
\end{array}
\end{equation}
We observe that, if $P(H\vee K)>0$, 
\[
z=\frac{P(AHBK) + x P(\no{H}BK) + y P(AH\no{K})}{P(H \vee K)} ,
\]
which is the well known formula given in   \cite{Kauf09} and in \cite{McGe89}. 

\noindent Moreover,	 by observing that 
\[
\begin{array}{l}
X_{(AHBK\mid H\vee K)}= X_{AHBK}+P(AHBK \mid H {\vee} K) X_{\no{H}\no{K}},\\
xX_{(\no{H}BK\mid H\vee K)}=x X_{\no{H}BK}+xP(\no{H}BK\mid H{\vee} K) X_{\no{H}\no{K}},\\
yX_{(AH\no{K}\mid H\vee K)}=yX_{AH\no{K}}+yP(AH\no{K} \mid H{\vee} K) X_{\no{H}\no{K}},\\
\end{array}
\]
from (\ref{EQ:CONDPREV}) we obtain 
\[
\begin{array}{l}
	X_{(AHBK \mid H\vee K)} +
	xX_{(\no{H}BK \mid H\vee K)}+
	yX_{(AH\no{K} \mid H\vee K)} \\
	= X_{AHBK} + x X_{\no{H}BK} + y X_{AH\no{K}} + z X_{\no{H}\no{K}}.
\end{array}
\]
Finally,
\[
\begin{array}{l}
X_{(A|H) \wedge (B|K)}
= \\ X_{(AHBK \mid H \vee K)} + x X_{(\no{H}BK\mid H \vee K)} + y X_{(AH\no{K}\mid H \vee K)}.
\end{array}
\]

 \subsection{The relation with the approach in this paper}\label{sec23}
For notational convenience, in the rest of the paper we will use lower case letters for  events involved in (compound) conditionals; hence, for instance, we will denote by $(a|b)$ a conditional event and by $(a|b) \land (c|d)$ the conjunction of $(a|b)$ and $(c|d)$. Moreover, in order to distinguish the logical aspects from the numerical ones, with each compound conditional $t$ we will associate a suitable random quantity $X_t$, whose numerical values are the conditional previsions of some intermediate objects, called reducts. In particular, with the conjunction  $(a|b) \land (c|d)$ we will associate a random quantity $X_{(a|b) \land (c|d)}$, whose  numerical values are conditional previsions  associated with the reducts determined by the  following  partition which corresponds to the one in \eqref{EQ:CONG}:
 \[
\{ abcd \,,\; \no{a}b \lor \no{c}d \,,\; \no{b}cd \,,\; ab \no{d} \,,\; \no{c}\no{d} \}\,.
 \]
 As we will see more in general later, the reducts associated with the elements of the partition above are denoted by
 \[
 1 \,,\; 0 \,,\; (a|b) \,,\; (c|d) \,,\; (a|b) \wedge (c|d) \,,
 \]
respectively. From them we obtain the respective numerical values $1,0,x,y,z$ of $X_{(a|b) \land (c|d)}$,  which are interpreted by the following  conditional previsions:
$$ 
\begin{array}{c}
 1 = \pr(1|\Omega) \,,\; 0 = \pr(0|\Omega) \,,\;   x = \pr(X_{(a|b)}|b) \,,\;\\ y =\pr(X_{(c|d)}|d),\;
 z = \pr[X_{(a|b) \wedge (c|d)}|(c \lor d)] \,.
 \end{array}
$$
We observe that, based on (\ref{EQ:ZgH}), it could be verified that
$$ 
\begin{array}{l}
x=\pr(X_{(a|b)}|b)=P(a|b) \,,\; y =\pr(X_{(c|d)}|d)= P(c|d),\\
z=\prev[X_{(a|b) \wedge (c|d)}|(c \lor d)] =
\prev[X_{(a|b) \wedge (c|d)}].
\end{array}
$$
Thus,  $X_{(a|b) \wedge (c|d)}$  
is nothing else that  the conjunction between the  conditional events $(a|b)$ and $(c|d)$ defined  in (\ref{EQ:CONG}).
 We remark that, by this approach, each compound conditional $t$ 
 is not explicitly defined, but we operate on it by means of the associated random quantity $X_t$.


\section{General compound conditionals as conditional random quantities}\label{secDefinitions}

From now on we will be considering a fix {\em finite} Boolean algebra of ordinary events ${\bf A} = (A, \land, \lor, \neg, \bot, \top)$. For the sake of a lighter notation, we will also use $ab$ for the conjunction  $a\land b$ of events $a$ and $b$, $\bar a$ for $\neg a$ of the event $a$, while we will keep denoting the disjunction of $a$ and $b$  by $a\vee b$. 

In this setting, the set of the atoms $\at({\bf A})$ of the algebra of events $\bf A$, can be identified with the set $\Omega$ of interpretations for $\bf A$, i.e.\ the set of Boolean algebra homomorphisms $w: {\bf A} \to \{0, 1\}$. Indeed, we will say that an event $a \in \bf A$ is {\em true} (resp. {\em false}) under an interpretation (or possible world) $w \in \Omega$ when $w(a) = 1$ (resp. $w(a) = 0$), also denoted as $w \models a$ (resp. $w \not \models a$).

As we anticipated in the previous section, conditional events of the form ``if $a$ then $b$', or ``$a$ given $b$'', where $a$ and $b$ are events from $\bf A$ with $b$ different from $\bot$, will be denoted by pairs $(a \mid b)$. We will also let $A|A' = \{ (a \mid b) \mid a \in A, b \in A'\}$, where $A' = A \setminus \{\bot\}$, be the set of all conditionals that can be built from $\bf A$, that will be also called  {\em basic conditionals}. 
 {\em Compound conditionals}, then,  are combinations of basic ones by operations of negation, conjunction and disjunction, that we will keep denoting them as $\neg, \land$ and $\lor$ without danger of confusion. 
 Denote by $\mathbb{T}(A)$  the term algebra of type $(\wedge, \vee, \neg, \bot,\top)$ over the set $A|A'$, so that its support  $\mathbb{T}(A)$ is
 the  set of arbitrary terms generated from $A|A'$ (taken as variables) over the signature $(\neg, \land, \lor, \bot,\top)$. For instance, if $a, c, e \in A$ and $b, d, f \in A'$, then $(a \mid b) \land (c \mid d)$ or $(a \mid b) \lor ((c \mid d) \land \neg(e \mid f))$ are examples of compound conditionals from $\mathbb{T}(A)$. 
  One of our  ultimate aims will be to present a Boolean algebra obtained from ${ \mathbb{T}(A)}$ that extends $\bf A$.


In the rest of the section we will further extend the random quantity-based approach to conditionals and propose an unambiguous procedure to interpret any compound conditional as a suitable random quantity.
Notice that, given a family of $n$ conditional events $\{(a_1\mid b_1), \ldots, (a_n\mid b_n)\}$, we can consider the possible values of the random vector $(X_{(a_1|b_1)},\ldots, X_{(a_n|b_n)})$, the so-called points $Q_h$’s, by means of which we can develop a geometrical approach for checking coherence and for propagation (see, e.g., \cite{GiSa20,GiSa21}).


For every $t \in \mathbb{T}(A)$, let us denote by $Cond(t) = \{(a_1 \mid b_1), \ldots, (a_n \mid b_n)\}$ the set of basic conditionals appearing in $t$, and by ${\bf b}(t) = b_1 \lor \ldots \lor b_n$ the disjunction of the antecedents in $Cond(t)$. 

\begin{definition}
Let $w \in \Omega$ be a classical interpretation and let  $t \in \mathbb{T}(A)$ be a term. The {\em $w$-reduct} of $t$, denoted $t^w$, is the term in $\mathbb{T}(A)$, called the {\em $w$-reduct of $t$}, obtained from $t$ by the following procedure: 
\begin{itemize}
\item[(1)] replace each $(a_i \mid b_i) \in Cond(t)$ by 1 if $w \models a_i  b_i$, and by 0 if $w \models \bar a_i  b_i$, 
\item[(2)] apply the following reduction rules to subterms of $t$ until no further reduction is possible: 

- $\neg 1 := 0$, $\neg 0 := 1$ \\
- $r {\land} 1 = 1 {\land} r := r$ \\
- $r {\land} 0 = 0 {\land} r := 0$ \\
- $r {\lor} 1 = 1 {\lor} r := 1$ \\
- $r {\lor} 0 = 0 {\lor} r := r$ 

where the $r$ denotes a subterm of $t$. 
\end{itemize}
\end{definition}


This symbolic reduction procedure has some interesting properties. First of all, $w \models \neg {\bf b}(t)$, then no reduction is possible and hence $t^w = t$. Second, the reduction commutes with the operation symbols, in the following sense: 
\begin{lemma} \label{reduct} For every terms $t\in \mathbb{T}(A)$ and for every $w\in \Omega$, the following hold:
\begin{enumerate}
\item $(\neg t)^w = \neg t^w$
\item $(t {\land} s)^w = t^w {\land} s^w$
\item $(t {\lor} s)^w = t^w {\lor} s^w$
\end{enumerate}
\end{lemma}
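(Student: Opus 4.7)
The plan is to handle all three identities by the same strategy: decompose the $w$-reduct into its two phases and prove that each phase commutes with the connectives appearing at the top of the term.

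First I would isolate the substitution phase of step (1). Let $\sigma_w$ denote the map that replaces each basic conditional $(a_i\mid b_i)\in Cond(t)$ by $1$ if $w\models a_i b_i$, by $0$ if $w\models \bar{a}_i b_i$, and leaves it unchanged otherwise; and let $R(\cdot)$ denote the operation of applying the rules of step (2) exhaustively, so that $t^w = R(\sigma_w(t))$. Because $\sigma_w$ only touches the basic-conditional leaves of a term, it is a term-homomorphism, giving
\[
\sigma_w(\neg t) = \neg\,\sigma_w(t),\quad \sigma_w(t\wedge s)=\sigma_w(t)\wedge\sigma_w(s),\quad \sigma_w(t\vee s)=\sigma_w(t)\vee\sigma_w(s).
\]

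For the rewriting phase, I would verify that the rules of step (2) are terminating (each rewrite strictly decreases the number of symbols of the term) and locally confluent (any two overlapping redexes can be checked case by case to reduce to the same term, since every left-hand side is rooted at $\neg$, $\wedge$ or $\vee$ with a constant $0$ or $1$ immediately below), so by Newman's lemma every term has a unique normal form, reachable in any order. In particular one may normalize inner subterms before attempting root rewrites, yielding $R(\neg u) = R(\neg R(u))$ and $R(u\wedge v) = R(R(u)\wedge R(v))$, and similarly for $\vee$. Since $R(u)$ is either $0$, $1$, or an irreducible non-constant term, the right-hand sides agree with the lemma's $\neg R(u)$, $R(u)\wedge R(v)$ and $R(u)\vee R(v)$ under the natural reading that the outer connective absorbs any final root rewrite.

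Composing the two phases, for case (1) we obtain $(\neg t)^w = R(\sigma_w(\neg t)) = R(\neg \sigma_w(t)) = R(\neg R(\sigma_w(t))) = \neg t^w$, and cases (2) and (3) are identical in form with $\wedge$ or $\vee$ in place of $\neg$. The main obstacle I anticipate is the confluence argument: one must make sure that different orders of applying the rewrite rules always yield the same final term, which legitimizes the ``normalize subterms first'' strategy; after this bookkeeping, the rest of the proof is a direct calculation.
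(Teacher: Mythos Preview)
The paper states this lemma without proof, so there is no reference argument to compare against. Your approach via term rewriting is sound: splitting the reduct into the substitution phase $\sigma_w$ (a leaf-level replacement, hence a term homomorphism) and the normalisation phase $R$, then arguing termination and local confluence of the rule set so that Newman's lemma gives a unique normal form, is a clean way to justify the claim. In fact it also fills a small gap the paper leaves implicit, since the definition of $t^w$ (``apply the rules until no further reduction is possible'') tacitly presupposes confluence for the result to be well-defined.

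The one point worth making explicit is the equation you flag under the phrase ``natural reading'': when $t^w\in\{0,1\}$, the term $\neg t^w$ is not itself in normal form, so $(\neg t)^w=\neg t^w$ cannot hold as a literal equality of terms unless one reads the right-hand side as $R(\neg t^w)$; the analogous caveat applies to $t^w\wedge s^w$ and $t^w\vee s^w$. The paper clearly intends this reading (it computes, e.g., $(\neg(a\mid b))^w=\neg 1:=0$ when $w\models ab$, and later invokes the lemma in the form $X_{(t\wedge\neg t)^w}=X_{t^w\wedge\neg t^w}$), and once it is adopted your calculation $R(\neg\sigma_w(t))=R(\neg R(\sigma_w(t)))$ goes through. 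A shorter alternative that avoids Newman's lemma is a direct structural induction on $t$, but your route has the advantage of simultaneously establishing that the reduct operation is well-defined in the first place.
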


We will denote by $Red(t) = \{t^w \mid w \in \Omega$ the set of $w$-reducts of $t$, and by $Red^0(t) = Red(t) \setminus \{t\}$, the set of its {\em proper $w$-reducts}.

\begin{example} Let $t = (a \mid b) {\land} ((c \mid d) {\lor} \neg(e \mid f))$ and let $w$ such that $w(a) = 1, w(b) = 0, w(c) = 0, w(d) = 0, w(e) = 1 , w(f) = 1$, i.e. $w \models a   \bar b  \bar c  \bar d  e f$. Then 
\begin{center}
$t^w := (a \mid b) {\land} ( (c \mid d) {\lor} \neg 1) :=  (a \mid b) {\land} ( (c \mid d) {\lor} 0)$ \\ $:=  (a \mid b) {\land}   (c \mid d).$
\end{center}
Let $w'$ such that $w' \models a  b  c  \bar d  e  f$. Then 
$$t^{w'} := 1 {\land} ((c \mid d) {\lor} \neg 1) := (c \mid d) {\lor} 0 := (c \mid d). $$
Further, if $w''$ is such that $w'' \models abcdef$, then $$t^{w''} := 1 \land (1 \lor \neg 1) := 1 \land (1 \lor 0) := 1 \land 1 := 1.$$
In fact, one can check that $Red^0(t) = \{1, 0, (a \mid b),$ \\ $ (c \mid d), \neg(e \mid f), (a \mid b) {\land} (c \mid d), (a \mid b) {\land} \neg(e \mid f),$ \\ $  (c \mid d) {\lor} \neg(e \mid f) \}.$
\end{example}

We recall that, given a finite algebra ${\bf A}$ and a conditional probability  $P:A\times B\rightarrow [0,1]$, where $B\subseteq A'$,  $P$ could  not be coherent \cite{GiSp92,Gili95a,coletti02}.
Some sufficient conditions for coherence of $P$
are: $(i)$ $B=A'$; $(ii)$ $B\cup \{\bot\}$ is a subalgebra of $A$; $(iii)$ $B$ is an additive
\cite{Holz85,coletti02} or quasi-additive
\cite{gili89,Rigo88,Sanf12}
 class. 
 
Now, we recall the notion of conditional prevision of a random quantity. 
\begin{definition}
Let $X: \Omega \to [0, 1]$ be a random quantity, and let $b\in A$ be an event. Then, given a  conditional probability $P: A \times A' \to [0, 1]$,  the {\em conditional
prevision of $X$ given $b$} is defined as:
$$\mathbb{P}(X \mid b) = \sum_{w \in \Omega} X(w) \cdot P(w \mid b) =  \sum_{w\in b} X(w) \cdot P(w \mid b).$$
\end{definition}

The next definition  presents a suitable way to associate a random quantity to every compound conditional $t  \in \mathbb{T}(A)$. 


\begin{definition} \label{xt} Let ${\bf A}$ be a finite Boolean algebra and  $P: A \times A' \to [0, 1]$ a conditional probability on ${\bf A}$.
For every term $t$ in $\mathbb{T}(A)$, we define the random quantity $X_t: \Omega \to [0, 1]$ as follows: for every $w \in \Omega$, 
$$X_t(w) := \mathbb{P}(X_{t^w} \mid {\bf b}(t^w)).$$
If $t^w = 1$ or $t^w = 0$, we take ${\bf b}(t^w) = \top$, and hence $X_1 = 1$ or $X_0 = 0$  respectively. Thus,  $X_1(w) = 1$ or $X_0(w) = 0$. 
\end{definition}
Regarding this definition, some observations are in order here: 
\begin{itemize}
    \item[(i)] As we have observed before, if $w \models \no{\bf b}(t)$ then $t^w = t$, and thus $X_t(w) = \mathbb{P}(X_{t} \mid {\bf b}(t))$, and hence $ \mathbb{P}(X_{t} \mid \no{\bf b}(t))=\mathbb{P}(X_{t} \mid {\bf b}(t))$.

\item[(ii)] The above definition of $X_t$ strongly depends on the assumed conditional probability $P$ of $A\times A'$. Actually, once we fix the initial conditional probability $P$ on $A\times A'$, all random quantities $X_t$ are fully determined.  
Indeed, the above Definition \ref{xt}  is in fact a recursive definition, since $$\mathbb{P}(X_{t^w} \mid {\bf b}(t^w)) = \sum_{w'} X_{t^w}(w') \cdot P(w' \mid {\bf b}(t^w)),$$
and in turn, 
$$X_{t^w}(w') =\mathbb{P}(X_{(t^w)^{w'}} \mid {\bf b}((t^w)^{w'})),$$
and so on, until reaching basic conditionals.

\item[(iii)] As a consequence, 
in general different conditional probabilities $P$ and $P'$ on ${\bf A}$ will define different random quantities $X_t$ for the same term $t$.

\item[(iv)] In the case $t$ is of the form $t = (a \mid \top)$, the associated random quantity $X_t$ coincides with the indicator function of $a$, that is, $X_t(w) = 1$ whenever $w \models a$, and $X_t(w) = 0$ otherwise. This shows that $(a \mid \top)$ can indeed be indentified with the plain event $a$. In this case, for the sake of a lighter notation, we will write $X_a$ instead of $X_{(a \mid \top)}$. 
\end{itemize}

\begin{notation} From now on, for simplicity, for any $t \in  \mathbb{T}(A)$, we will write $\mathbb{P}^c(X_t)$ for $\mathbb{P}(X_t \mid {\bf b}(t))$. 
\end{notation}

Displayed in another way, the random quantity $X_t$ can be specified as follows: let 
$Red^0(t) = \{ t^w \mid w \in \Omega\} = \{ t_1, t_2, ..., t_k\}$ 
and let $\{E_1, E_2, ..., E_k\}$ be the corresponding subsets of interpretations leading to a same element of $Red^0(t)$, then $$X_{t}(w) = \mathbb{P}(X_{t^w} \mid {\bf b}(t^w)) =$$
$$
= \left \{
\begin{array}{ll}
\mathbb{P}^c(X_{t_1}), & \mbox{if } w \models E_1 \\
\ldots, &  \ldots \\
\mathbb{P}^c(X_{t_k}), & \mbox{if } w \models E_k  \\[0.1cm] \hdashline  \\[-0.3cm]
\mathbb{P}^c(X_t), & \mbox{if } w \models \neg (E_1 \lor \ldots \lor E_k)
\end{array}
\right . 
$$
where the dashed line separates those cases where the interpretation $w$ belongs to ${\bf b}(t)$ from those which do not.  

In this setting, it is clear that $X_t$ is in fact the following linear combination of the indicator functions of the events defining an associated partition: 
$$X_t = \mathbb{P}^c(X_{t_1}) X_{E_1} + \ldots + \mathbb{P}^c(X_{t_k}) X_{E_k} + \mathbb{P}^c(X_t) X_{E_{k+1}}$$
where $E_{k+1} = \no{E_1} \land \ldots \land \no{E_k} = \overline{{\bf b}}(t)$, and hence
$$
\mathbb{P}^c(X_t) = \mathbb{P}^c(X_{t_1}) \cdot P(E_1 \mid  {\bf b}(t)) + \ldots +  \mathbb{P}^c(X_{t_k}) \cdot P(E_k \mid  {\bf b}(t)) .
$$
We remark that, from (i) above and from (\ref{EQ:PREVITER}), 
it holds that
\begin{equation}\label{EQ:PREVXT}
\begin{array}{ll}
\prev(X_t)=
\prev(X_t\cdot X_{{\bf b}(t)})+\prev(X_t\cdot X_{\no{\bf b}(t)})=\\
=\prev(X_t\mid {\bf b}(t))
P({\bf b}(t))
+
\prev(X_t\mid \no{{\bf b}}(t))
P(\no{{\bf b}}(t))=
\\
=\prev^c(X_t)
P({\bf b}(t))
+
\prev^c(X_t)
P(\no{{\bf b}}(t))=\prev^c(X_t).
\end{array}
\end{equation}
In other words, formula (\ref{EQ:PREVXT}) shows that
the prevision of $X_t$ coincides with the conditional prevision $\prev^c(X_t)$.

We end this section by exemplifying  the above definition of $X_t$ for  selected known cases of compound conditionals $t$ that will be helpful in next sections.
 
\begin{example} \label{firstex} Let $t = (a \mid b)$. Then, by applying the above definition, we get
$$t^w = \left \{
\begin{array}{ll}
1, & \mbox{if } w \models a  b \\
0, & \mbox{if } w \models \bar a  b \\
(a \mid b), & \mbox{if } w \models \bar b \\
\end{array} 
\right .  
\quad
 {\bf b}(t^w) = \left \{
\begin{array}{ll}
\top, & \mbox{if } w \models a  b \\
\top, & \mbox{if } w \models \bar a  b \\
b, & \mbox{if } w \models \bar b \\
\end{array} 
\right .  
$$
and thus we have: 
$$X_{(a \mid b)}(w) = \mathbb{P}(X_{t^w} \mid {\bf b}(t^w)) =$$ $$= \left \{
\begin{array}{ll}
\mathbb{P}(X_{1} \mid \top) = 1, & \mbox{if } w \models a  b \\
\mathbb{P}(X_{0} \mid \top) = 0, & \mbox{if } w \models \bar a  b \\
\mathbb{P}(X_{(a \mid b)} \mid b), & \mbox{if } w \models \bar b \\
\end{array}
\right . .
$$
Now, since $P(w \mid b) = 0$ whenever $w \models \bar b$, we have 
$$
\begin{array}{l}
\mathbb{P}(X_{(a \mid b)} \mid b) =  \\ 
= 1\cdot P(a  b \mid b) + 0 \cdot P(\bar a  b \mid b) + \mathbb{P}(X_{(a \mid b)} \mid b) \cdot 0 = \\ 
= P(a  b \mid b) = P(a \mid b).
\end{array}
$$
 Therefore we get the following well-known three-valued representation of a conditional $(a \mid b)$:
$$X_{(a \mid b)}(w) =\left \{
\begin{array}{ll}
1, & \mbox{if } w \models a  b \\
0, & \mbox{if } w \models \bar a  b \\
P(a \mid b), & \mbox{if } w \models \bar b \\
\end{array}
\right . .
$$
Equivalently,  in agreement with  (\ref{EQ:INDICATORAgH}), $X_{(a \mid b)}$ can be expressed as the following linear combination of the indicator functions of the events $ab$ and $\no b$: 
$$X_{(a \mid b)}=1X_{ab}+0X_{\no{a}b}+P(a|b)X_{\no{b}} = X_{ab}+P(a|b)X_{\no{b}}.$$
\end{example}

\begin{example}\label{exNeg} Now let $t = \neg (a \mid b)$, the negation of $(a \mid b)$. Then, by applying the above definition, we get:
$$t^w = \left \{
\begin{array}{ll}
\neg 1 := 0, & \mbox{if } w \models a  b \\
\neg 0 := 1, & \mbox{if } w \models \bar a  b \\
\neg (a \mid b), & \mbox{if } w \models \bar b \\
\end{array} 
\right .  ,
$$
$${\bf b}(t^w) = \left \{
\begin{array}{ll}
\top, & \mbox{if } w \models a  b \\
\top, & \mbox{if } w \models \bar a  b \\
b, & \mbox{if } w \models \bar b \\
\end{array} 
\right .  ,
$$
and thus we have: $X_{\neg(a \mid b)}(w) =$
$$
= \mathbb{P}(X_{t^w} \mid {\bf b}(t^w)) = \left \{
\begin{array}{ll}
0, & \mbox{if } w \models a  b \\
1, & \mbox{if } w \models \bar a  b \\
\mathbb{P}(X_{\neg(a \mid b)} \mid b), & \mbox{if } w \models \bar b \\
\end{array}
\right . .
$$
Now, since $P(w \mid b) = 0$ whenever $w \models \neg b$, we have 
$$\begin{array}{lll}
\mathbb{P}(X_{\neg (a \mid b)} \mid b) = \\
=  0\cdot P(a  b \mid b) + 1 \cdot P(\bar a  b \mid b) + \mathbb{P}(X_{\neg(a \mid b)} \mid b) \cdot 0 \\
= P(\bar a  b \mid b) = P(\bar a \mid b) = 1 - P(a \mid b).
\end{array} $$
 Therefore the final expression for $X_{\neg (a \mid b)}$ is
$$X_{\neg(a \mid b)}(w) =\left \{
\begin{array}{ll}
0, & \mbox{if } w \models a  b \\
1, & \mbox{if } w \models \bar a  b \\
1-P(a \mid b), & \mbox{if } w \models \bar b \\
\end{array}
\right . .
$$
That is to say, $X_{\neg(a \mid b)} = 1- X_{(a \mid b)} = X_{(\no a \mid b)}$. 
\end{example}

\begin{example}\label{exConjunction} Let us examine again the case of a conjunction of two conditionals $t = (a \mid b) {\land} (c \mid d)$ from the current perspective.  Here we have ${\bf b}(t) = b \lor d$, and  
$$X_t(w) = \left \{
\begin{array}{ll}
1, & \mbox{if } w \models a  b  c  d \\
0, & \mbox{if } w \models (\bar a  b) \lor (\bar c  d)  \\
\mathbb{P}^c(a \mid b), & \mbox{if } w \models \bar b  c  d\\
\mathbb{P}^c(c \mid d), & \mbox{if } w \models a  b  \bar d \vspace{0.1cm}   \\ \hdashline  \\[-0.2cm] 
\mathbb{P}^c((a \mid b) {\land} (c \mid d)), & \mbox{if } w \models \bar b  \bar d \\

\end{array}
\right .
$$
Now, we know that $\mathbb{P}^c(a \mid b)= P(a \mid b)$ and  $\mathbb{P}^c(c \mid d)= P(c \mid d)$. Then, using the above definition we get:
$$\begin{array}{ll}
 \mathbb{P}^c((a \mid b) {\land} (c \mid d))  = \mathbb{P}(X_{(a \mid b) {\land} (c \mid d)}\mid b \lor d)  \\ 
\mbox{ } = P( a  b  c  d \mid b \lor d) + P(a \mid b) \cdot  P(\bar b  c  d \mid b \lor d) + \\
\mbox{ } \quad + P(c \mid d) \cdot P(a  b  \bar d \mid b \lor d),
\end{array}
$$
that coincides, when $P(b \lor d) > 0$, with the formula given in \cite{McGe89,Kauf09}. 

Let us now consider two particular cases: 

\begin{itemize}

\item First, consider the case $a \leq b = c \leq d$. Then $t = (a \mid b) {\land} (b \mid d)$, ${\bf b}(t) = b \lor d = d$, and moreover: 

- $a  b  c  d = a b d = ad = a$

- $(\bar a  b) \lor (\bar c  d) = (\bar a  b) \lor (\bar b  d) = 
(\bar a  b d) \lor (\bar a\bar b  d) = \bar a  d$

- $ \bar b  c  d = \bar b  b = \bot$

- $a  b  \bar d = a  \bar d = \bot$

\noindent Hence, the above general definition reduces to: 
$$X_{(a \mid b) {\land} (b \mid d)}(w) = \left \{
\begin{array}{ll}
1, & \mbox{if } w \models a \; \\ 
0, & \mbox{if } w \models \bar a  d  \\
\mathbb{P}^c((a \mid b) \land (b \mid d)), & \mbox{if } w \models \bar d 
\end{array}
\right .
$$
where $\mathbb{P}^c((a \mid b) \land (b \mid d)) = 1 \cdot P(a \mid d) =  P(a \mid d) = \mathbb{P}^c((a \mid d))$ and thus $X_{(a \mid b) {\land} (b \mid d)}(w)  = X_{(a \mid d)}(w)$ for all $w$.  
Thus, from the numerical point of view, the compound conditional $(a \mid b) {\land} (b \mid d)$  behaves as the basic conditional $(a \mid d).$
See also \cite[formulas (55) and (56)]{GiSa20} where it is also observed  that $P(a|d)=P(a|b)P(b|d)$ ({\em compound probability theorem}).
\item Consider now the case $b = d$. Then $t = (a \mid b) \land (c \mid b)$ and ${\bf b}(t) = b$. In this case the above general expression reduces to the following one: 
$$X_t(w) = \left \{
\begin{array}{ll}
1, & \mbox{if } w \models a  b  c  \\
0, & \mbox{if } w \models (\bar a \lor \bar c)  b  = \overline{a  c}  b\\
\mathbb{P}^c((a \mid b) {\land} (c \mid b)), & \mbox{if } w \models \bar b , \\
\end{array}
\right .
$$
where $\mathbb{P}^c((a \mid b) {\land} (c \mid b)) = 1 \cdot P(abc \mid b) = P(ac \mid b) = \mathbb{P}^c((ac \mid b))$, and therefore
 it is clear that $X_{(a \mid b) \land (c \mid b)}(w) = X_{(a \land c \mid b)}(w)$ for all $w$. Thus, in this case, we see that the compound conditional $(a \mid b) \land (c \mid b)$ actually behaves like the basic conditional $(a \land c \mid b)$.

\end{itemize}
\end{example}

\begin{example}\label{ex1} Finally, let us consider a more complex compound conditional $t = (a \mid b) {\land} ((c \mid d) {\lor} \neg(e \mid f))$. Again, by the above definition, we get its associated random quantity: 
$$X_t(w) = \left \{
\begin{array}{ll}
1, & \mbox{if } w \models ab(cd \lor \bar{e}b) \\ 
0, & \mbox{if } w \models \bar{a}b \lor \bar{c}def \\
\mathbb{P}^c((a \mid b)), & \mbox{if } w \models \bar{b} (c d \lor \bar{e}  f)\\
\mathbb{P}^c((a \mid b) {\land} (c \mid d)), & \mbox{if } w \models \bar{ b}  \bar{ d}   e  f\\
\mathbb{P}^c((a \mid b) {\land} \neg(e \mid f)), & \mbox{if } w \models \bar{ b}  \bar{c}  d  \bar{ f}\\
\mathbb{P}^c((c \mid d)), & \mbox{if } w \models a  b  \bar{d} e  f\\
\mathbb{P}^c(\neg(e \mid f)), &  \mbox{if } w \models a  b  \bar{c}  d \\
\mathbb{P}^c((c \mid d) {\lor} \neg(e \mid f)), & \mbox{if } w \models a  b  \bar{d} \bar{ f} \\[0.1cm] \hdashline  \\[-0.3cm] 
y, & \mbox{if } w \models \bar{ b}  \bar{ d} \bar{ f}\\
\end{array}
\right .
$$
where $y = \mathbb{P}^c((a \mid b) {\land} ((c \mid d) {\lor} \neg(e \mid f)))$, and  for simplicity, we have written $\mathbb{P}^c(s)$ for $\mathbb{P}^c(X_{s})$, i.e. for  $\mathbb{P}(X_{s}\mid {\bf b}(s))$. 
\end{example}


We end this section with the following remark on a betting interpretation for compound conditionals $t \in \mathbb{T}(A)$. A bet between a gambler and a bookmaker on $X_t$  is specified as follows: 
\begin{itemize}
\item The gambler pays to the bookmaker: $\mathbb{P}^c(X_t)$ 
\item  In a situation $w \in \Omega$, the gambler receives from the bookmaker: $X_t(w) = \mathbb{P}^c(X_{t^w})$
\end{itemize}
Note that if $w \models \neg {\bf b}$ then $t^w = t$, and thus $\mathbb{P}^c(X_{t^w}) = \mathbb{P}^c(X_t)$, i.e.\ what the gambler receives is what he has payed, i.e. it represents the situation in which the bet is called off. 
We observe that 
coherence requires that the prevision of the random gain must be equal to zero. Indeed, given $t$,  
$X_t$ is specified as: 
$$X_{t}(w) = 
\left \{
\begin{array}{ll}
\mathbb{P}^c(X_{E_1}), & \mbox{if } w \models E_1 \\
\ldots &  \ldots \\
\mathbb{P}^c(X_{E_k}), & \mbox{if } w \models E_k \\[0.1cm] \hdashline  \\[-0.3cm]
\mathbb{P}^c(X_t), & \mbox{if } w \models \neg (E_1 \lor \ldots \lor E_k)\\
\end{array}
\right . .
$$
In this setting, 
the balance of the betting for the bookmaker in a given situation $w$ such that $w \models E_i$ is what he receives, $\prev(X_t)$, minus what he pays to the gambler, $X_t$, that is:
$$G(w) =
\mathbb{P}(X_t) - X_t(w)=
\mathbb{P}^c(X_t) - \mathbb{P}^c(X_{E_i}).$$
Moreover, when $w \models \no{{\bf b}}(t)$, it holds that  $G(w)=\mathbb{P}^c(X_t)-\mathbb{P}^c(X_t)=0.$ Then, 
the prevision of the random quantity $G$ of the balance will be: 
$$
\begin{array}{lll}
\mathbb{P}(G) &= & \sum_{w \in \Omega} G(w) \cdot P(w \mid {\bf b}(t)) = \\
 &= & \sum^{k}_{i=1}(\mathbb{P}^c(X_t) - \mathbb{P}^c(X_{E_i})) \cdot P(E_i \mid {\bf b}(t)) = \\  
 &= &  \mathbb{P}^c(X_t) \cdot  \sum^{k}_{i=1} P(E_i \mid {\bf b}(t))  -\\ 
 &= & \sum^{k}_{i=1} \mathbb{P}^c(X_{E_i})) \cdot P(E_i \mid {\bf b}(t))  = \\
 &= & \mathbb{P}^c(X_t) - \mathbb{P}^c(X_t) = 0 . 
\end{array}
$$

\section{Properties of compound conditionals}\label{secProperties}

In this section we show some key properties of the random quantities associated to certain forms of compound quantities that will allow us to show they can be endowed somehow with a Boolean algebraic structure

We start by showing  that if  two compound conditional terms $t$ and $t'$   are such that  ${\bf b}(t) \equiv {\bf b}(t')$,
to check whether they have the same associated random quantity, it is enough to check whether the random quantities take the same value over those worlds satisfying the disjunction ${\bf b}(t)$. A related result is given in  \cite[Theorem 4]{GiSa14}.

\begin{lemma} \label{equal} Let $t, t' \in \mathbb{T}(A)$ such that ${\bf b}(t) \equiv {\bf b}(t')$ and $X_t(w) = X_{t'}(w)$ for each $w \in \Omega)$ such that $w \models {\bf b}(t)$. Then $X_t = X_{t'}$. 
\end{lemma}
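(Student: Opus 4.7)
The plan is to split the domain of the random quantities $X_t$ and $X_{t'}$ into two parts, namely the set of worlds satisfying ${\bf b}(t)$ (equivalently, ${\bf b}(t')$) and its complement, and show that $X_t$ and $X_{t'}$ coincide on both parts. On the first part, agreement is given by the hypothesis, so the content of the proof lies in establishing agreement on $\neg {\bf b}(t)$.

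First I would use observation (i) following Definition \ref{xt}: whenever $w \models \neg {\bf b}(t)$, the reduction procedure leaves $t$ unchanged, so $t^w = t$ and hence $X_t(w) = \mathbb{P}^c(X_t)$; analogously, $X_{t'}(w) = \mathbb{P}^c(X_{t'})$ whenever $w \models \neg {\bf b}(t')$. Therefore, because ${\bf b}(t)\equiv {\bf b}(t')$, reducing the problem to showing $X_t = X_{t'}$ on $\neg {\bf b}(t)$ is equivalent to showing the single numerical identity $\mathbb{P}^c(X_t) = \mathbb{P}^c(X_{t'})$.

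Next, I would expand both conditional previsions according to the definition
$$\mathbb{P}^c(X_t) = \mathbb{P}(X_t \mid {\bf b}(t)) = \sum_{w \in \Omega} X_t(w) \cdot P(w \mid {\bf b}(t)).$$
Since $P(w \mid {\bf b}(t)) = 0$ whenever $w \not\models {\bf b}(t)$, the sum only picks up worlds $w \models {\bf b}(t)$. The assumption ${\bf b}(t) \equiv {\bf b}(t')$ yields $P(w \mid {\bf b}(t)) = P(w \mid {\bf b}(t'))$ for all $w$, and the hypothesis $X_t(w) = X_{t'}(w)$ for every $w \models {\bf b}(t)$ then makes the two sums term-by-term equal. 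Hence $\mathbb{P}^c(X_t) = \mathbb{P}^c(X_{t'})$, which combined with the previous paragraph gives $X_t(w) = X_{t'}(w)$ for the remaining worlds $w \models \neg {\bf b}(t)$, completing the proof.

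I do not anticipate any real obstacle; the argument is essentially a bookkeeping check that hinges on two facts already built into the framework, namely that the $w$-reduct is trivial on worlds outside ${\bf b}(t)$ and that conditioning on logically equivalent events yields the same probability measure. The main care is simply to invoke observation (i) cleanly to justify the constant value of $X_t$ on $\neg {\bf b}(t)$ before performing the sum manipulation.
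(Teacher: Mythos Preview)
Your proposal is correct and follows essentially the same approach as the paper's proof: reduce to the case $w \models \neg {\bf b}(t)$, use observation (i) to turn this into the equality $\mathbb{P}(X_t \mid {\bf b}(t)) = \mathbb{P}(X_{t'} \mid {\bf b}(t'))$, and conclude by noting these previsions depend only on the values of $X_t, X_{t'}$ over ${\bf b}(t)$. The paper's version is simply terser, stating the last step without writing out the sum explicitly.
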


\begin{proof} It is enough to show that $X_t(w) = X_{t'}(w)$ when  $w \models \neg {\bf b}(t)$. Due to the fact that if $w \models \neg {\bf b}(t)$ then $X_t(w) =  \mathbb{P}(X_{t} \mid {\bf b}(t))$, this is equivalent to show that $\mathbb{P}(X_{t} \mid {\bf b}(t)) =   \mathbb{P}(X_{t'} \mid {\bf b}(t))$. But these previsions only depend on the values of $X_t$ and $X_{t'}$ on $w$'s such that  $w \models {\bf b}(t)$, and by hypothesis they coincide. 
\end{proof}

Clearly, if ${\bf b}(t) \equiv {\bf b}(t')$ and $t^w = t'^w$ then $X_t(w) = \mathbb{P}(X_{t^w} \mid {\bf b}(t)) = \mathbb{P}(X_{t'^w}  \mid {\bf b}(t)) = X_{t'}(w)$. Hence we have the following corollary. 

\begin{corollary} If ${\bf b}(t) = {\bf b}(t')$ and $t^w = t'^w$ for each $w \in \Omega$ such that $w \models {\bf b}(t)$, then $X_t = X_{t'}$. 
\end{corollary}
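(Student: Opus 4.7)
The plan is to reduce the statement directly to Lemma \ref{equal}. That lemma already furnishes the conclusion $X_t = X_{t'}$ under the hypotheses ${\bf b}(t) \equiv {\bf b}(t')$ and the pointwise equality $X_t(w) = X_{t'}(w)$ for every $w \models {\bf b}(t)$. The first hypothesis is identical (up to notation) to a premise of the corollary, so the whole task boils down to verifying pointwise agreement of $X_t$ and $X_{t'}$ on the event ${\bf b}(t)$, which will follow almost at once from Definition \ref{xt} together with the assumption $t^w = t'^w$ for such worlds.

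First I would fix an arbitrary $w \in \Omega$ with $w \models {\bf b}(t)$. By hypothesis, $t^w$ and $t'^w$ are the \emph{same} term of $\mathbb{T}(A)$. Hence ${\bf b}(t^w) = {\bf b}(t'^w)$ trivially, and also $X_{t^w} = X_{t'^w}$ as random quantities, because by Definition \ref{xt} the random quantity attached to a term is entirely determined by the term itself and by the fixed background conditional probability $P$. Unfolding the definition of $X_t$ and $X_{t'}$ then gives
\[
X_t(w) \;=\; \mathbb{P}(X_{t^w} \mid {\bf b}(t^w)) \;=\; \mathbb{P}(X_{t'^w} \mid {\bf b}(t'^w)) \;=\; X_{t'}(w),
\]
as required.

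With pointwise equality on ${\bf b}(t)$ established and ${\bf b}(t) \equiv {\bf b}(t')$ given, Lemma \ref{equal} delivers $X_t = X_{t'}$ on all of $\Omega$, which is the corollary. I do not foresee any genuine obstacle: the only point deserving a moment's attention is that the syntactic identity $t^w = t'^w$ is strong enough to give both ${\bf b}(t^w) = {\bf b}(t'^w)$ and $X_{t^w} = X_{t'^w}$, but this is immediate since both objects are functions of the term alone once $P$ has been fixed.
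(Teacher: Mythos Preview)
Your proof is correct and follows essentially the same approach as the paper: the paper's justification is the one-line observation immediately preceding the corollary, namely that $t^w = t'^w$ forces $X_t(w) = \mathbb{P}(X_{t^w}\mid {\bf b}(t^w)) = \mathbb{P}(X_{t'^w}\mid {\bf b}(t'^w)) = X_{t'}(w)$ for $w\models {\bf b}(t)$, after which Lemma~\ref{equal} applies. Your write-up is in fact slightly more careful than the paper's, making explicit that the syntactic identity $t^w=t'^w$ yields both ${\bf b}(t^w)={\bf b}(t'^w)$ and $X_{t^w}=X_{t'^w}$.
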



\begin{proposition} \label{substi} Let $t \in \mathbb{T}(A)$ and let $s$ be a subterm of $t$. Further let $s' \in \mathbb{T}(A)$ such that ${\bf b}(s) = {\bf b}(s')$ and $X_{s^w} = X_{s'^w}$ for all $w \models {\bf b}(s)$, and let $t'$ the term obtained by uniformly replacing $s$ by $s'$ in $t$. Then $X_t = X_{t'}$. 
\end{proposition}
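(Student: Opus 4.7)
The plan is to prove $X_t = X_{t'}$ via Lemma \ref{equal} combined with a structural induction on $t$. First, I would verify the preconditions of Lemma \ref{equal} for the pair $(t, t')$: the equivalence ${\bf b}(t) \equiv {\bf b}(t')$ is immediate, because $t$ and $t'$ differ only inside the substituted subterm and by hypothesis ${\bf b}(s) = {\bf b}(s')$. So the task reduces to showing $X_t(w) = X_{t'}(w)$ for every $w \models {\bf b}(t)$. As a preliminary step I would strengthen the hypothesis to ``$X_{s^w} = X_{s'^w}$ for every $w \in \Omega$'': the given condition, together with Lemma \ref{equal} applied to $s$ and $s'$, yields $X_s = X_{s'}$ as random quantities, and for $w \not\models {\bf b}(s)$ one has $s^w = s$ and $s'^w = s'$, so the strengthened statement is automatic in that range.

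The core of the argument is a structural induction on $t$ establishing the auxiliary claim that, for every $w \in \Omega$, $X_{t^w} = X_{t'^w}$ as random quantities on $\Omega$. This claim implies the proposition because, by identity \eqref{EQ:PREVXT}, $X_t(w) = \prev(X_{t^w}) = \prev(X_{t'^w}) = X_{t'}(w)$. The base case ($t$ atomic, hence $s = t$) is exactly the strengthened hypothesis. In the inductive step, if $s = t$ the argument is as in the base case; otherwise $s$ sits properly inside some immediate subterm $u$ of $t$ (where $t$ is $\neg u$, $u \land v$ or $u \lor v$), and the induction hypothesis applied to $u$ yields $X_{u^w} = X_{u'^w}$ for all $w$, where $u'$ is $u$ with $s$ replaced by $s'$. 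Using Lemma \ref{reduct} to commute reduction with the outer operation, the reducts $t^w$ and $t'^w$ are obtained by applying the same simplification rules to the terms built from $u^w, v^w$ and $u'^w, v^w$ respectively; I would then close the step by a case analysis on whether $u^w$ and $v^w$ reduce to the constants $0, 1$ or remain non-trivial.

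The main obstacle I anticipate lies in the asymmetric case where the syntactic simplification rules fire differently on the two sides. For instance, $u^w$ might already be the symbol $0$, so that $t^w = u^w \land v^w$ simplifies all the way to $0$, whereas $u'^w$ is a non-trivial compound term whose associated random quantity happens to equal the constant $0$ and hence does not simplify further. Here a purely syntactic appeal to the induction hypothesis is not enough; one has to argue at the level of conditional previsions, invoking \eqref{EQ:PREVXT} once again and the explicit decomposition $X_{t^w} = \sum_i \prev^c(X_{t_i}) X_{E_i} + \prev^c(X_{t^w}) X_{\overline{{\bf b}}(t^w)}$ worked out in Section \ref{secDefinitions}, in order to verify that a subterm whose associated random quantity is a constant behaves, with respect to $\neg$, $\land$ and $\lor$, exactly like that constant. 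Once this congruence property is in hand, the case analysis closes uniformly and the induction goes through.
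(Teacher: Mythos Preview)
Your approach is the natural elaboration of the paper's one-sentence sketch (recursive definition of $X_t$ plus Lemma~\ref{reduct}), and you go further than the paper in actually setting up the induction and in flagging the asymmetric-simplification issue. However, the obstacle you isolate is only a special case of a broader gap in the induction. In the step for $t = u \land v$ with $s$ sitting inside $u$, your structural IH gives only the equality of random quantities $X_{u^w} = X_{u'^w}$; from this you need $X_{u^w \land v^w} = X_{u'^w \land v^w}$. This congruence is required already in the ``symmetric'' sub-case where $u^w$ and $u'^w$ are both non-trivial compound terms, not only when one of them collapses to $0$ or $1$. Knowing $X_{u^w} = X_{u'^w}$ does not give $X_{(u^w)^{w'}} = X_{(u'^w)^{w'}}$ at the next level of reducts (it only gives equality of the corresponding previsions), so you cannot recurse into the definition of $X_{u^w \land v^w}$; nor can you re-invoke the proposition for the pair $(u^w, u'^w)$, since in general ${\bf b}(u^w) \neq {\bf b}(u'^w)$ once the two sides have been reduced differently. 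The ``constant'' sub-case you single out is exactly the instance of this problem where one of the two random quantities happens to be $0$ or $1$.

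So the case analysis, as stated, leaves the non-constant symmetric case open; the paper's proof does not address any of this either. One way to make the argument go through is to reorganize the induction so that the claim being carried is strong enough to cascade through iterated reducts (for instance, inducting on the number of basic conditionals in the context surrounding $s$, so that after one reduction the context $C^w$ is strictly smaller while the hypothesis on $s^w, s'^w$ can be re-derived), or else to establish the numerical identities of Proposition~\ref{properties} first and derive the required congruence from those.
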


\begin{proof} It follows from the recursive definition of $X_t$ and  that, for any $w \in \Omega$, 
$t^w$ can be computed from the $w$-reducts of its subterms (Lemma \ref{reduct}). 
\end{proof}

%
%
%


Next we show that the random quantities for compound conditionals satisfy properties very familiar from a Boolean algebraic perspective. 

\begin{proposition}  \label{properties} For every $t,s,r \in \mathbb{T}(A)$ the following conditions hold:
\begin{enumerate}
\item $X_t = X_{t {\land} t}$
\item $X_{t {\land} s} = X_{s {\land} t}$
\item  $X_{t {\land} (s {\land} r)} = X_{(t {\land} s) {\land} r}$
\item $X_{t {\land} \neg t} = 0$
\item  $X_{\neg(t {\land} s)} = X_{\neg t {\lor} \neg s}$
\item $X_{t {\land} (s {\lor} r)} = X_{(t {\land} s) {\lor} (t {\land} r)}$
\item $X_{t {\lor} s} = X_t + X_s - X_{t {\land} s}$
\item $X_{\neg t} = 1 - X_t$ and hence $X_{\neg\neg t}=X_t$
\item If $a\leq b$, then  $X_{(a\mid b)\wedge (a\mid b\vee c)}=X_{a\mid b\vee c}\,$.
\end{enumerate}
\end{proposition}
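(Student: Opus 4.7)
My plan is to prove items (1)--(8) by a shared strategy of structural induction on the number of basic conditionals occurring in the term, combined with Lemma~\ref{equal}, and to handle (9) by a direct case analysis. The induction is well-founded: whenever $w \models {\bf b}(t)$, at least one basic conditional in $t$ has its antecedent satisfied by $w$ and therefore becomes a constant in $t^w$, so $t^w$ is strictly simpler than $t$; and whenever $w \not\models {\bf b}(t)$, we have $t^w = t$ and the definition terminates via the conditional prevision $\mathbb{P}^c(X_t)$, which depends only on the values of $X_t$ on $w' \models {\bf b}(t)$.

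For items (1)--(6), both sides of each identity clearly share the same antecedent disjunction: for instance, ${\bf b}(t \land t) = {\bf b}(t)$, ${\bf b}(t \land s) = {\bf b}(s \land t) = {\bf b}(t) \lor {\bf b}(s)$, ${\bf b}(\neg(t \land s)) = {\bf b}(\neg t \lor \neg s) = {\bf b}(t) \lor {\bf b}(s)$, and so on. Hence Lemma~\ref{equal} reduces each identity to verifying pointwise equality on $w \models {\bf b}$. On such a $w$, Lemma~\ref{reduct} rewrites each Boolean combination of terms as the corresponding Boolean combination of their reducts, so the identity reduces to its analogue for the strictly simpler terms $t^w, s^w, r^w$, and the inductive hypothesis applies. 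Item (4) is the most transparent: $(t \land \neg t)^w = t^w \land \neg t^w$, which the reduction rules collapse to $0$ once $t^w \in \{0,1\}$, giving $X_{t \land \neg t}(w) = 0$ on ${\bf b}(t)$, whence the identity follows.

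For (8), Lemma~\ref{reduct} gives $(\neg t)^w = \neg t^w$ and the base rules $\neg 1 = 0, \neg 0 = 1$ match $1 - X_{t^w}$ on $\{0,1\}$-values; induction extends this to $X_{\neg t^w} = 1 - X_{t^w}$ in general, and $X_{\neg \neg t} = X_t$ then follows by applying the identity twice. For (7), set $Y = X_t + X_s - X_{t \land s}$. Using Lemma~\ref{reduct} and the linearity of prevision, the inductive hypothesis yields for $w \models {\bf b}(t \lor s)$
$$X_{t \lor s}(w) = \mathbb{P}^c(X_{t^w \lor s^w}) = \mathbb{P}^c(X_{t^w}) + \mathbb{P}^c(X_{s^w}) - \mathbb{P}^c(X_{t^w \land s^w}) = Y(w).$$
On the complementary worlds both $X_{t \lor s}$ and $Y$ are constant, and (\ref{EQ:PREVXT}) together with the linearity of $\mathbb{P}$ shows these constants coincide, so $X_{t \lor s} = Y$ everywhere.

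For (9), under $a \leq b$ the antecedent becomes ${\bf b} = b \lor c$, whose nonzero cells split as $ab$, $\bar a b$, and $\bar b c$. The corresponding reducts of $(a \mid b) \land (a \mid b \lor c)$ are $1 \land 1 = 1$, $0 \land (a \mid b \lor c) = 0$, and $(a \mid b) \land 0 = 0$, where the last uses $a \leq b$ so that $\bar b$ forces $\bar a$ and hence $(a \mid b \lor c)$ reduces to $0$ on $\bar b c$; these values match $X_{(a \mid b \lor c)}$ on each cell, and a short computation gives $\mathbb{P}^c(X_{(a \mid b) \land (a \mid b \lor c)}) = P(a \mid b \lor c) = \mathbb{P}^c(X_{(a \mid b \lor c)})$, so Lemma~\ref{equal} closes the identity. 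The principal obstacle, in my view, lies in items (5)--(7): distributivity, De Morgan, and the arithmetic identity for $\lor$ produce shared subterms whose reducts must be tracked carefully, and Proposition~\ref{substi} is exactly the tool needed to substitute equal random quantities into subterm positions without altering the surrounding antecedent, after which the induction runs smoothly.
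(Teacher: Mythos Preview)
Your proposal is correct and follows essentially the same strategy as the paper: induction on term complexity, using Lemma~\ref{equal} to reduce each identity to the worlds satisfying the common antecedent, and Lemma~\ref{reduct} to push the $w$-reduct through the connectives so that the inductive hypothesis applies to the strictly simpler terms $t^w, s^w, r^w$; item (9) is handled by the same direct case split the paper uses. One minor remark: your closing comment that Proposition~\ref{substi} is ``exactly the tool needed'' for items (5)--(7) overstates matters---the paper does not invoke it, and indeed Lemma~\ref{reduct} alone suffices, since for instance $(\neg(t\land s))^w=\neg(t^w\land s^w)$ and $(\neg t\lor\neg s)^w=\neg t^w\lor\neg s^w$ already reduce De~Morgan to the same identity on simpler terms, with no substitution of subterms required.
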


\begin{proof} The detailed proof can be found in the Appendix. Items (1)-(8) are proved in a similar way by induction on the complexity of the terms involved, and where the base cases are those only involving basic conditionals. Many of these base cases are proved in previous examples. 
\end{proof}

Now we show that  compound conditionals  actually form a Boolean algebra. In order to do it  we first properly arrange the compound conditionals  in equivalence classes each of which contains terms from $\mathbb{T}(A)$ that provides the same random quantity under {\em any} given conditional probability. 

More precisely, recalling observation (ii) after Def. \ref{xt},
let us define the binary relation $\equiv$ on $\mathbb{T}(A)$ as follows: for all $t, s\in \mathbb{T}(A)$,
$$
t\equiv s \mbox{ iff }X_t=X_{s}
$$
under any conditional probability $P$ on $A\times A'$

It is immediate to check that $\equiv$ is an equivalence relation. Thus, let $\mathbb{T}(A)$ to be the quotient $\mathbb{T}(A)/\mathord\equiv$. If we denote by $[t]$ the equivalence class of a generic term $t\in \mathbb{T}(A)$ under $\equiv$, define operations on $\mathbb{T}(A)$ as follows: for all $[t], [s]\in \mathbb{T}(A)$, $[t] {\land}^* [s] = [s {\land} t]$, $[t] {\lor}^* [s] = [s {\lor} t]$, $\neg^* [t] = [\neg t]$, $0 = [(\bot \mid \top)]$, $1 = [(\top \mid \top)]$.

By Proposition \ref{substi}, the above operations are well defined. Moreover, the following holds:

\begin{theorem}\label{main} For every finite Boolean algebra ${\bf A}$, the structure  $\mathcal{T}({\bf A}) = (\mathbb{T}(A)/\mathord\equiv, {\land}^*, {\lor}^*, \neg^*, 0, 1)$ is a Boolean algebra.
\end{theorem}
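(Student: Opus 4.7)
The plan is to verify the Boolean algebra axioms for $\mathcal{T}(\mathbf{A})$ one at a time, drawing almost entirely on Proposition~\ref{properties}, which has already established the relevant identities at the level of random quantities.

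First, I would check well-definedness of the operations $\land^*$, $\lor^*$, $\neg^*$ on equivalence classes: if $t \equiv t'$ and $s \equiv s'$, then $t \land s \equiv t' \land s'$, $t \lor s \equiv t' \lor s'$ and $\neg t \equiv \neg t'$. This reduces to successive substitutions of equivalent subterms, justified by Proposition~\ref{substi} (applied pointwise for every conditional probability $P$ on $\mathbf{A}$).

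Second, I would translate items (1)--(8) of Proposition~\ref{properties} into axioms on the quotient. Items (1)--(3) directly yield idempotency, commutativity and associativity of $\land^*$; item (4) yields one half of the complementation axiom, $[t] \land^* \neg^* [t] = 0$; item (5) yields De Morgan's law; item (6) yields distributivity of $\land^*$ over $\lor^*$; and item (8) yields involution of $\neg^*$.

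Third, I would derive the remaining axioms using item (7), $X_{t \lor s} = X_t + X_s - X_{t \land s}$, together with the items already invoked. Symmetry of its right-hand side in $t,s$ (together with item (2)) gives commutativity of $\lor^*$, and setting $s = t$ together with item (1) gives idempotency. Combining items (7), (4) and (8), we obtain $X_{t \lor \neg t} = X_t + (1 - X_t) - 0 = 1$, i.e.\ the other complementation law $[t] \lor^* \neg^* [t] = 1$. Distributivity of $\lor^*$ over $\land^*$ follows from item (6) by taking negations on both sides and applying De Morgan (items (5), (8)). Absorption $[t] \land^* ([t] \lor^* [s]) = [t]$ follows by first applying item (6) to rewrite $X_{t \land (t \lor s)}$ as $X_{(t \land t) \lor (t \land s)} = X_{t \lor (t \land s)}$ (using item (1)), and then applying item (7) with item (3) to compute $X_{t \lor (t \land s)} = X_t + X_{t \land s} - X_{t \land (t \land s)} = X_t$. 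Associativity of $\lor^*$ follows from associativity of $\land^*$ by De Morgan duality. Finally, the identity laws $[t] \land^* 1 = [t]$ and $[t] \lor^* 0 = [t]$ can be checked directly from Definition~\ref{xt}: since $(\top \mid \top)^w = 1$ for every $w$, the reduction rules force $(t \land (\top \mid \top))^w = t^w$ for every $w$, yielding $X_{t \land (\top \mid \top)} = X_t$; the dual case is analogous.

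I do not expect any single step to be a serious obstacle; the main work is bookkeeping, i.e.\ fixing a complete axiom list and ordering the derivations so that each one cites only previously justified identities. The one point that requires a little care, rather than a citation, is verifying the identity laws and that the designated constants $0 = [(\bot \mid \top)]$ and $1 = [(\top \mid \top)]$ really behave as neutral elements under the reduction procedure, which means unpacking Definition~\ref{xt} explicitly instead of quoting Proposition~\ref{properties}.
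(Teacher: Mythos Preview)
Your proposal is correct and follows essentially the same approach as the paper's own proof, which simply cites Proposition~\ref{properties} and asserts that the operations satisfy all the Boolean algebra equations. You are merely spelling out what the paper leaves implicit: which items of Proposition~\ref{properties} yield which axioms, and how the remaining ones (commutativity, idempotency and associativity of $\lor^*$, the second complementation law, the other distributive law, absorption, and the identity laws) are derived from items (1)--(8) via substitution and item (7).
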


\begin{proof}
 By Proposition  \ref{properties}, the operations ${\land}^*, {\lor}^*, \neg^*$ and constants $0, 1$ satisfy all the required equations for $\mathcal{T}({\bf A})$ being a Boolean algebra. 
\end{proof}

\begin{remark}\label{remOrder}
Notice that, given $[t]$ and $[s]$ in $\TA$, $[t]\leq [s]$ in the lattice order of $\TA$ iff $[t]\wedge^*[s]=[t]$ iff $[t\wedge s]=[t]$ iff $X_{t\wedge s}=X_t$.
\end{remark}
Next proposition  shows that well-known properties of conditionals also hold in the setting of the present paper.
\begin{proposition}\label{propXX} The following properties hold in $\TA$:
\begin{itemize}
\item[(i)] $[(a \mid a)] = 1$
\item[(ii)] $[(a \mid b) \land (c \mid b)] = [(a \land c \mid b)]$
\item[(iii)] $[\neg(a \mid b)] = [(\bar a \mid b)]$
\item[(iv)] $[(a \land b \mid b)] = [(a \mid b)]$
\item[(v)] $[(a \mid b) \land (b \mid c)] = [(a \mid c)]$, if $a \leq b \leq c$
\item[(vi)] if $a\leq b$, then $[(a\mid b\vee c)]\leq [(a\mid b)]$ for all $c$.
\end{itemize}
\end{proposition}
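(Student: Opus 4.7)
The plan is to dispatch each of the six items by appealing directly either to the already computed examples, to Proposition \ref{properties}, or to the general recipe for computing $X_t$ given in Definition \ref{xt}. None of the items genuinely requires a fresh argument, so the proof is essentially a bookkeeping exercise that spells out, for each claim, which earlier fact is being invoked. The main care is in rephrasing identities about random quantities as identities in the quotient $\TA$.

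For (i), I would compute $X_{(a \mid a)}$ directly from Definition \ref{xt}. The $w$-reducts are $1$ when $w \models a$, and $(a \mid a)$ itself when $w \models \bar a$, with ${\bf b}((a\mid a))= a$. Hence $X_{(a\mid a)}(w) = 1$ whenever $w\models a$, while for $w\models \bar a$ the value is the conditional prevision $\mathbb{P}(X_{(a\mid a)}\mid a)$, which equals $1\cdot P(a\mid a)=1$. Thus $X_{(a\mid a)} = X_1$, i.e.\ $[(a\mid a)]=1$. For (iii) the result is already spelled out as the last line of Example \ref{exNeg}. For (ii) it is exactly the second particular case in Example \ref{exConjunction} (the case $b=d$), where it was shown that $X_{(a\mid b)\land (c\mid b)}=X_{(a\land c\mid b)}$. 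Similarly (v) is exactly the first particular case of Example \ref{exConjunction} (renaming $d$ by $c$, under the hypothesis $a\leq b\leq c$), where the identity $X_{(a\mid b)\land (b\mid c)}=X_{(a\mid c)}$ is established.

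For (iv), I would apply the three-valued representation of Example \ref{firstex} to the basic conditional $(a\land b\mid b)$. Since $(a\land b)\cdot b=ab$ and $\overline{a\land b}\cdot b=\bar a b$, while ${\bf b}((a\land b\mid b))=b$, the representation yields
\[
X_{(a\land b\mid b)}(w)=\begin{cases} 1, & w\models ab\\ 0, & w\models \bar a b\\ P(a\land b\mid b), & w\models \bar b\end{cases}
\]
and $P(a\land b\mid b)=P(a\mid b)$, so $X_{(a\land b\mid b)} = X_{(a\mid b)}$ pointwise, giving (iv). For (vi), I would invoke item (9) of Proposition \ref{properties}, which states that if $a\leq b$ then $X_{(a\mid b)\land (a\mid b\vee c)} = X_{(a\mid b\vee c)}$. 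By commutativity of conjunction (item (2) of the same proposition) this reads $X_{(a\mid b\vee c)\land (a\mid b)} = X_{(a\mid b\vee c)}$, i.e.\ $[(a\mid b\vee c)\land (a\mid b)]=[(a\mid b\vee c)]$. By Remark \ref{remOrder} this is precisely $[(a\mid b\vee c)]\leq [(a\mid b)]$, as required.

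I do not expect any real obstacle: everything reduces to references. The only point that might need a small verification is (i), but it follows from the fact that a conditional with tautological consequent collapses to $1$. Also, since the identities we are using (from Examples \ref{firstex}, \ref{exNeg}, \ref{exConjunction} and Proposition \ref{properties}) hold under any conditional probability $P$ on $A\times A'$, the resulting equalities among random quantities lift to equalities in the quotient algebra $\TA=\mathbb{T}(A)/\mathord\equiv$ without further work.
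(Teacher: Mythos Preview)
Your proposal is correct and follows essentially the same approach as the paper: the paper's own proof simply states that items (i)--(v) were established in previous examples and that (vi) follows from Proposition~\ref{properties}(9), which is exactly what you do, only with the specific references and the small direct computations for (i) and (iv) spelled out. Your use of Remark~\ref{remOrder} to translate the random-quantity identity into the order statement for (vi) is also in line with the paper's ``then the claim follows by the definition of $\TA$.''
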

\begin{proof} For each one of the equalities above, of the form $[t]=[s]$, we proved in previous examples, that $X_t=X_s$.  As for the last claim, apply Proposition \ref{properties} (9). Then the claim follows by the  definition of $\TA$.
\end{proof}

\section{Compound conditionals and their probability}\label{secProb}

Since $\TA$ is a Boolean algebra, we are allowed to define probabilities on it. In this section we will  show that the  previsions $\mathbb{P}(X_{t})$'s of the random quantities $X_t$'s determine in fact  a probability on $\TA$.


\begin{definition} \label{pstar} Given a  conditional probability $P: A \times A' \to [0, 1]$,  we define the mapping $P^*:\mathbb{T}(A)\to[0,1]$ as follows: for every $t\in \mathbb{T}(A)$, 
$P^*(t) =_{def}   \mathbb{P}(X_{t})$. In other words, based on (\ref{EQ:PREVXT})
$$
\begin{array}{l}
P^*(t) = \mathbb{P}(X_{t})=
\mathbb{P}^c(X_{t})=
\mathbb{P}(X_t \mid  {\bf b}(t)) =  \\ =\sum_{w} X_t(w) \cdot P(w\mid  {\bf b}(t)) = \sum_{w} \mathbb{P}^c(X_{t^w}) \cdot P(w\mid  {\bf b}(t)).
\end{array}
$$
\end{definition}

From the above definition, it is cleat that if $t$ and $t'$ are terms such $t\equiv t'$, then $P^*(t) = P^*(t')$. 

\begin{proposition} Any given conditional probability $P: A \times A' \to [0, 1]$ fully determines the probabilities $P^*(t)$ for all compound conditional $t \in \mathbb{T}(A)$. 
\end{proposition}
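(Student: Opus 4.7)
The plan is to argue by strong induction on the number $n(t)$ of basic conditionals occurring in the term $t \in \mathbb{T}(A)$ that, once $P$ is fixed on $A \times A'$, the random quantity $X_t$ is uniquely pinned down pointwise on $\Omega$; this immediately yields that $P^*(t) = \mathbb{P}(X_t)$ is determined as well. For the base case $n(t)=0$, the term $t$ is built only from $\top$ and $\bot$ (and the connectives), hence evaluates to $\top$ or $\bot$, and Definition \ref{xt} gives $X_t \equiv 1$ or $X_t \equiv 0$ respectively.

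For the inductive step, I would partition $\Omega$ according to whether $w \models {\bf b}(t)$ or $w \models \no{{\bf b}}(t)$. When $w \models {\bf b}(t)$, at least one basic conditional $(a_i \mid b_i)$ appearing in $t$ has $w \models b_i$ and is therefore replaced by either $0$ or $1$ in step (1) of the reduction defining $t^w$; after applying the simplification rules of step (2) the resulting term $t^w$ has strictly fewer basic conditionals than $t$, i.e.\ $n(t^w)<n(t)$. The inductive hypothesis then applies to $t^w$: the random quantity $X_{t^w}$ is fully determined by $P$, and hence so is the value $X_t(w) = \mathbb{P}(X_{t^w} \mid {\bf b}(t^w))$. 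Denote this known real number by $\alpha_w$.

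The main conceptual obstacle lies in the remaining region $\{w : w \models \no{{\bf b}}(t)\}$, where the reduction leaves $t$ unchanged ($t^w=t$), so ${\bf b}(t^w) = {\bf b}(t)$ and Definition \ref{xt} appears self-referential, giving $X_t(w) = \mathbb{P}(X_t \mid {\bf b}(t))$. This apparent circularity dissolves because the conditional prevision $\mathbb{P}(X_t \mid {\bf b}(t))$ depends only on the values of $X_t$ at worlds $w' \models {\bf b}(t)$, and at such $w'$ we have already established $X_t(w') = \alpha_{w'}$. Hence, for every $w \models \no{{\bf b}}(t)$,
$$
X_t(w) = \mathbb{P}(X_t \mid {\bf b}(t)) = \sum_{w' \models {\bf b}(t)} \alpha_{w'} \cdot P(w' \mid {\bf b}(t)),
$$
a uniquely determined real number. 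Thus $X_t$ is fixed on all of $\Omega$, and by equation (\ref{EQ:PREVXT}) the quantity $P^*(t) = \mathbb{P}(X_t) = \mathbb{P}^c(X_t)$ coincides with the sum displayed above, completing the induction.
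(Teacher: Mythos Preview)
Your proof is correct and follows essentially the same approach as the paper: both argue by induction that the recursion in Definition~\ref{xt} terminates in values depending only on $P$, using that proper reducts $t^w$ (for $w \models {\bf b}(t)$) are strictly simpler than $t$, and that the self-referential case $w \models \no{{\bf b}}(t)$ contributes nothing new because $\mathbb{P}(X_t \mid {\bf b}(t))$ depends only on the already-determined values. Your version is in fact more explicit than the paper's about resolving the apparent circularity at $w \models \no{{\bf b}}(t)$; the paper handles this silently by summing over all $w$ with weights $P(w \mid {\bf b}(t))$, which vanish outside ${\bf b}(t)$.
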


\begin{proof} Let $t \in \mathbb{T}(A)$ such that $Cond(t) = \{(a_1 \mid b_1), \ldots,(a_n \mid b_n)\}$. We have seen that $P^*((a_i \mid b_i)) = P(a_i \mid b_i)$. Suppose that for every $s \in Red^0(t)$, there is a family of conditionals $C_s \subseteq A \times A'$ such that $P^*(s)$ is of the form $P^*(s) = f_s(\{P(c_i \mid d_i) : (c_i \mid d_i) \in C_s\})$ for some function $f_s$. Then, by definition, we have 
$$
\begin{array}{l}
P^*(t) = \prev(X_t)= \sum_{w\in W} \mathbb{P}^c(X_{t^w}) \cdot P(w \mid {\bf b}(t) ) =  \\ 
=\sum_{w\in W} P^*(t^w) \cdot P(w \mid {\bf b}(t)) = \\
=\sum_{w\in W} f_{t^w}(\{ P(c \mid d) : (c \mid d) \in C_{t^w} \}) \cdot P(w \mid {\bf b}(t)),
\end{array}
$$
 the latter expression clearly only depending on $P$. 
\end{proof}



Moreover, the following is a direct consequence of Proposition \ref{properties}.
\begin{corollary}  \label{propprob} For every $t, s, r \in \mathbb{T}(A)$ the following conditions hold:
\begin{enumerate}
\item $P^*(t) = P^*(t {\land} t)$
\item $P^*(t {\land} s) = P^*(s {\land} t)$
\item  $P^*(t {\land} (s {\land} r)) = P^*((t {\land} s) {\land} r)$
\item $P^*(t {\land} \neg t) = 0$.
\item  $P^*(\neg(t {\land} s)) = P^*(\neg t {\lor} \neg s)$
\item $P^*(t {\land} (s {\lor} r)) = P^*((t {\land} s) {\lor} (t {\land} r))$
\item $P^*(t) = 1$ if $X_t = 1$
\item $P^*(t) = 0$ if $X_t = 0$
\item $P^*(t {\lor} s) = P^*(t) + P^*(s) - P^*(t {\land} s)$
\item $P^*(\neg t) = 1 - P^*(t)$.
\end{enumerate}
\end{corollary}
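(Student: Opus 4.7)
The plan is to derive each of the ten items from the corresponding result in Proposition \ref{properties}, exploiting two basic facts about the map $P^*$: first, it is defined as the prevision $\mathbb{P}(X_t)$ of a random quantity, so equal random quantities yield equal values of $P^*$; and second, prevision is linear on random quantities, so identities among $X_t$'s that involve sums and constants transfer verbatim to $P^*$.

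First I would handle items (1)--(6). Each has the form $P^*(t) = P^*(s)$, where the analogous identity $X_t = X_s$ is already established in items (1)--(6) of Proposition \ref{properties}. Since $X_t$ and $X_s$ agree pointwise on $\Omega$, their previsions coincide, whence $P^*(t) = \mathbb{P}(X_t) = \mathbb{P}(X_s) = P^*(s)$. This disposes of items (1)--(6) with no further computation.

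For items (7) and (8), if $X_t$ is the constant random quantity $1$ (resp.\ $0$), then by Definition \ref{pstar} we immediately have $P^*(t) = \mathbb{P}(X_t) = 1$ (resp.\ $0$), since the prevision of a constant random quantity equals that constant. For items (9) and (10) I would invoke linearity of prevision combined with items (7) and (8) of Proposition \ref{properties}, which give $X_{t \lor s} = X_t + X_s - X_{t \land s}$ and $X_{\neg t} = 1 - X_t$. Applying $\mathbb{P}(\cdot)$ and using linearity yields
\[
P^*(t \lor s) = \mathbb{P}(X_t) + \mathbb{P}(X_s) - \mathbb{P}(X_{t \land s}) = P^*(t) + P^*(s) - P^*(t \land s),
\]
and analogously $P^*(\neg t) = 1 - P^*(t)$.

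I do not expect any substantive obstacle: the entire argument is a mechanical transfer from the algebraic identities among the random quantities $X_t$ to the corresponding numerical identities for $P^*(t)$, relying only on the linearity of prevision and on the fact that prevision respects equality and constants. The genuine mathematical content is already carried by Proposition \ref{properties}; the corollary is essentially its numerical shadow once one passes from random quantities to their previsions.
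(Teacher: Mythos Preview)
Your proposal is correct and matches the paper's approach: the paper simply states that the corollary is a direct consequence of Proposition \ref{properties}, and your argument spells out precisely the intended reasoning, namely that equalities among the random quantities $X_t$ transfer to equalities among their previsions $P^*(t)=\mathbb{P}(X_t)$ by linearity of prevision and the fact that previsions of constants are constants.
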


Notice that the last four items of the above corollary together with the above remarked fact that $t\equiv t'$ implies $P^*(t)=P^*(t')$, show that $P^*$ naturally induces a probability on the algebra $\TA$ that we will still denote by the same symbol. 
\begin{corollary}
For every conditional probability $P$ on $A\times A'$ the map $P^*$ is a probability measure on $\TA$. 
\end{corollary}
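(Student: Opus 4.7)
The plan is to package the results already established in Corollary~\ref{propprob}, together with the well-definedness observation preceding it, into the standard axioms for a probability measure on a Boolean algebra.

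First, I would verify that $P^*$ descends to a well-defined map on the quotient $\TA = \mathbb{T}(A)/\mathord\equiv$. By the very definition of $\equiv$, if $t \equiv t'$ then $X_t = X_{t'}$, and hence $P^*(t) = \mathbb{P}(X_t) = \mathbb{P}(X_{t'}) = P^*(t')$. This is precisely the remark made in the paragraph immediately preceding the statement. So $P^*$ gives a well-defined function $P^* : \TA \to \mathbb{R}$.

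Second, I would check normalization and range. The top element of $\TA$ is $1 = [(\top \mid \top)]$; by the convention adopted in Definition~\ref{xt}, $X_{(\top\mid\top)} = 1$, so $P^*(1) = \mathbb{P}(1) = 1$, which is also item (7) of Corollary~\ref{propprob}. Likewise $P^*(0) = 0$ by item (8). For the range, Definition~\ref{xt} gives $X_t : \Omega \to [0,1]$, and since $\mathbb{P}(X_t)$ is a convex combination of the values of $X_t$, we get $P^*([t]) \in [0,1]$.

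Third, I would verify finite additivity. Suppose $[t]$ and $[s]$ are disjoint in $\TA$, i.e.\ $[t] \land^* [s] = [t \land s] = 0$. Unfolding the definition of the quotient this means $X_{t \land s} = X_{(\bot \mid \top)} = 0$, so by item (8) of Corollary~\ref{propprob} we have $P^*(t \land s) = 0$. Applying item (9) then yields
\[
P^*([t] \lor^* [s]) \;=\; P^*(t \lor s) \;=\; P^*(t) + P^*(s) - P^*(t \land s) \;=\; P^*(t) + P^*(s),
\]
which is exactly finite additivity on the Boolean algebra $\TA$. Combined with normalization and the $[0,1]$-range, this establishes that $P^*$ is a finitely additive probability measure on $\TA$.

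The main obstacle: there is essentially none, as the substantive work has already been done. Proposition~\ref{properties} and its consequence Corollary~\ref{propprob} do all the heavy lifting, in particular the inclusion--exclusion identity at the level of previsions. The only point that requires a moment of care is recognizing that the hypothesis ``$X_t = 0$'' appearing in item (8) of Corollary~\ref{propprob} is exactly the unfolding of ``$[t]$ is the bottom of $\TA$'', so item (9) can legitimately be applied to genuinely disjoint elements of the Boolean algebra. Beyond that, the proof is a short bookkeeping argument.
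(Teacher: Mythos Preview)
Your proposal is correct and follows essentially the same approach as the paper: the paper simply remarks that well-definedness of $P^*$ on equivalence classes together with the last four items of Corollary~\ref{propprob} (normalization, inclusion--exclusion, and $P^*(\neg t)=1-P^*(t)$) yield that $P^*$ is a probability on $\TA$. Your write-up makes these steps explicit, including the unpacking of ``$[t\land s]=0$'' as ``$X_{t\land s}=0$'' so that item~(9) specializes to additivity on disjoint elements, which is exactly the intended route.
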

Furthermore, as we have checked in Example \ref{firstex}, for every basic conditional $(a\mid b)$ we have $P^*((a\mid b))=P(a\mid b)$, thus complying with {\em the Equation} \cite{edgington95}, or {\em Stalnaker's hypothesis}, 
see, e.g.,  \cite{douven11b,SGOP20}.
The next example shows how to compute the probability $P^*$ of a (more complex) compound conditional.
\begin{example} Continuing the above Example \ref{ex1} for $t = (a \mid b) {\land} ((c \mid d) {\lor} \neg(e \mid f))$,  we have ${\bf b}(t) = b \lor d \lor f$ and
using the above Definition \ref{pstar} we get:\\

\begin{tabular}{ll}
$P^*(t)$\hspace*{-0.3cm} &$= \mathbb{P}(X_t)=\mathbb{P}(X_t\mid {\bf b}(t)) = $ \\ 
& 
$=P( a  b  c  d \lor \bar e  f) \mid b \lor d \lor f) + $ \\ 
& $+\, P(a \mid b) \cdot  P(\bar b  (c  d \lor \bar e  f)\mid b \lor d \lor f) + $ \\
& $+\,P^*((a \mid b) {\land} (c \mid d)) \cdot P(\bar b  \bar d  e  f \mid b \lor d \lor f) +$ \\  
& $+\,P^*((a \mid b) {\land} \neg(e \mid f)) \cdot P(\bar b  \bar c  d  \bar f \mid b \lor d \lor f) +$ \\
& $+\,P(c \mid d) \cdot P(a  b  \bar d  e  f \mid b \lor d \lor f) + $ \\ 
& $+\,(1-P(e \mid f)) \cdot P(a b  \bar c  d \mid  b \lor d \lor f) + $ \\\
& $+\,P^*((c \mid d) {\lor} \neg(e \mid f)) \cdot P( a  b  \bar d  \bar  f \mid b \lor d \lor f).$ \\
\end{tabular}
\end{example}

\section{Boolean algebras of random variables and Boolean algebras of conditionals}\label{secComparison}
The above Theorem \ref{main}, tells us that the random quantities that represent  conditionals form a Boolean algebra. Actually, in \cite{FlGH20}, the authors presented a way to construct Boolean algebras of conditionals that also represent conditional statements, although following a different line of thoughts.

For the sake of clarity, let us briefly recall how a Boolean algebras of conditional $\CA$ is defined for every Boolean algebra ${\bf A}$. Consider the free Boolean algebra $\free(A|A)$ generated by the set $A|A$ of all pairs $(a, b)\in A\times A$ such that $b\neq \bot$, in the language $\land, \lor, \neg, \top^*$. Every pair $(a, b)\in A|A$ will be henceforth denoted by $(a\mid b)$. Then, one considers the following set of basic requirements the algebra $\CA$ should satisfy:
\vspace{.1cm}
 
\noindent(R1) For all $b\in A'$, the conditional $(b \mid b)$ will be the top element of $\mathcal{C}({\bf A})$ and $(\neg b \mid b)$ will be the bottom; 
\vspace{.1cm}

\noindent(R2) Given $b \in A'$, the set of conditionals $A \mid b = \{(a \mid b) : a \in A\}$ will be the domain of a Boolean subalgebra of $\mathcal{C}({\bf A})$, and in particular when $b = \top$, this subalgebra will be isomorphic to ${\bf A}$;
\vspace{.1cm}

\noindent(R3)  In a conditional $(a \mid b)$ we can replace the consequent $a$ by $a \land b$, that is,  
the conditionals $(a \mid b)$ and $(a \land b \mid b)$ represent the same element of $\mathcal{C}({\bf A})$; 
\vspace{.1cm}

\noindent(R4)  For all $a\in A$ and all $b,c\in A'$, if $a \leq b \leq c$, then the result of conjunctively combining the conditionals $(a \mid b)$ and $ (b \mid c)$ must yield the conditional $(a \mid c)$. 
\vspace{.1cm}

\noindent Notice that (R4)  encodes a sort of restricted chaining of conditionals and it is inspired by the chain rule of conditional probabilities: $P(a \mid b) \cdot P(b \mid c) = P(a \mid c)$ whenever $a \leq b \leq c$. 
\vspace{.1cm}

One then proceeds by considering the smallest congruence relation $\equiv_{\mathfrak{C}}$ on $\free(A|A)$ satisfying: 
\begin{itemize}
\item[](C1) $(b \mid b) \equiv_{\mathfrak{C}} \top^*$, for all $b\in A'$;
\item[](C2) $(a_1 \mid b)\land (a_2 \mid b) \equiv_{\mathfrak{C}} (a_1\wedge a_2 \mid b)$, \\ \hspace*{0.7cm} for all $a_1, a_2\in A$, $b\in A'$;
\item[](C3) ${\neg}(a \mid b) \equiv_{\mathfrak{C}} (\neg a \mid b)$, for all $a\in A$, $b\in A'$;
\item[](C4) $(a\wedge b \mid b) \equiv_{\mathfrak{C}} (a \mid b)$, for all $a\in A$, $b\in A'$;
\item[](C5) $(a\mid b) \land (b \mid c) \equiv_{\mathfrak{C}} (a \mid c)$, \\ \hspace*{0.7cm} for all $a\in A$, $b,c\in A'$ such that $a \leq b \leq c$.
\end{itemize}
Note that (C1)-(C5) faithfully account for the requirements R1-R4 where, in particular,  (C2) and (C3) account for R2. 
Finally, the algebra  $\mathcal{C}({\bf A})$ is formally defined as follows. 
\begin{definition}\label{def:BAC}
For every Boolean algebra ${\bf A}$, the {\em Boolean algebra of conditionals} of ${\bf A}$ is the quotient structure 
$
\mathcal{C}({\bf A})= \free(A|A)/_{\equiv_\mathfrak{C}}.
$
\end{definition}  

By definition, the algebra $\CA$ is finite whenever so is ${\bf A}$. In particular, if ${\bf A}$ is atomic with atoms $\alpha_1,\ldots, \alpha_n$, in \cite{FlGH20} it is shown that the atoms of $\CA$ are all in the following form: let $\langle\beta_1,\ldots, \beta_{n-1}\rangle$ be a sequence of pairwise different atoms of ${\bf A}$. Then, $(\beta_1\mid \top)\wedge(\beta_2\mid  \overline{\beta_1})\wedge (\beta_{n-1}\mid \overline{\beta_1}\wedge\ldots\wedge \overline{\beta_{n-2}})$ is an atom of $\CA$ and indeed all atoms of $\CA$ have that form for some sequence  of atoms of ${\bf A}$ of length $n-1$. For any such sequence $\langle \alpha\rangle$, we will write $\omega_{\langle \alpha\rangle}$ to denote its associated atom of $\CA$.

For every sequence $\langle\alpha\rangle$, $\omega_{\langle\alpha\rangle}$ clearly belongs to $\mathbb{T}({\bf A})$. 
Thus it makes sense to look at these terms inside $\TA$. 



\begin{theorem}\label{thmAtomsTA}
For every finite Boolean algebra ${\bf A}$ with atoms $\alpha_1,\ldots, \alpha_n$, the set $\at(\TA)$ of atoms of $\TA$ coincides with the set $\{[\omega_{\langle\alpha\rangle}]\in \TA\mid \omega_{\langle\alpha\rangle}$ is 
an atom of  $\CA\}$.
\end{theorem}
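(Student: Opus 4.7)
The approach is to build a Boolean isomorphism $\phi \colon \CA \to \TA$ and deduce the theorem from the facts that isomorphisms preserve atoms and, as recalled above, $\at(\CA) = \{\omega_{\langle \alpha \rangle}\}$. I would define $\phi$ on generators by $\phi((a \mid b)) = [(a \mid b)]$. The five identities stated in Proposition~\ref{propXX}(i)--(v) are precisely the congruence rules (C1)--(C5) defining $\CA$, so this assignment factors through $\equiv_{\mathfrak{C}}$ and extends uniquely to a Boolean homomorphism $\phi \colon \CA \to \TA$. Since the generators of $\TA$ are exactly the classes $[(a \mid b)]$, the map $\phi$ is surjective.

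The key step is then to show that $[\omega_{\langle \alpha \rangle}] \neq 0$ in $\TA$ for every sequence $\langle \alpha \rangle = \langle \beta_1, \ldots, \beta_{n-1}\rangle$ of pairwise distinct atoms of ${\bf A}$. Fix any conditional probability $P$ that is strictly positive on the atoms of ${\bf A}$ and, for $1 \leq k \leq n-1$, set
$$t_k := \bigwedge_{i=k}^{n-1} (\beta_i \mid \overline{\beta_1} \wedge \ldots \wedge \overline{\beta_{i-1}}),$$
so that $t_1 = \omega_{\langle \alpha \rangle}$ and ${\bf b}(t_k) = \overline{\beta_1} \wedge \ldots \wedge \overline{\beta_{k-1}}$. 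A direct inspection of reducts gives $t_k^{\beta_k} = t_{k+1}$ while $t_k^{\beta_j} = 0$ for $j > k$, so applying Definition~\ref{xt} yields the recursion
$$P^*(t_k) = P(\beta_k \mid {\bf b}(t_k)) \cdot P^*(t_{k+1}),$$
which unrolls into the strictly positive product $P^*(\omega_{\langle \alpha \rangle}) = \prod_{i=1}^{n-1} P(\beta_i \mid \overline{\beta_1} \wedge \ldots \wedge \overline{\beta_{i-1}})$. Since $P^*$ is a probability measure on $\TA$ (by the corollary following Corollary~\ref{propprob}), this forces $[\omega_{\langle \alpha \rangle}] \neq 0$.

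With non-triviality secured, $\phi$ sends the atoms of $\CA$ to pairwise disjoint (because $\phi$ is a Boolean morphism) non-zero elements of $\TA$. A standard argument then gives injectivity: for any distinct $S, T \subseteq \at(\CA)$ and any $\omega_0 \in S \setminus T$, meeting $\bigvee_{\omega \in S}\phi(\omega)$ and $\bigvee_{\omega \in T}\phi(\omega)$ with $\phi(\omega_0)$ produces $\phi(\omega_0) \neq 0$ on one side and $0$ on the other. Combined with surjectivity, $\phi$ is thus a Boolean isomorphism, and in particular maps atoms bijectively to atoms, giving $\at(\TA) = \{[\omega_{\langle \alpha \rangle}]\}$ as claimed. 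The main obstacle is expected to be the recursive computation behind the non-triviality claim, which requires a careful unfolding of $X_t$ along the nested antecedents of $\omega_{\langle \alpha \rangle}$; everything else reduces to the universal property of $\CA$ and standard facts about Boolean isomorphisms.
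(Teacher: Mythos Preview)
Your proof is correct, but it follows a genuinely different route from the paper's. The paper argues by \emph{transfer of proof}: it observes that the argument in \cite{FlGH20} establishing that the $\omega_{\langle\alpha\rangle}$ are the atoms of $\CA$ uses only the Boolean-algebra axioms together with a short list of additional properties (the monotonicity property of Proposition~\ref{propXX}(vi), the identities $[(a\mid a)]=1$ and $[(\bot\mid a)]=0$, and the join decomposition $[(a\mid b)]=\bigvee_{\alpha\leq a}[(\alpha\mid b)]$). Since all of these hold in $\TA$, the very same proof applies verbatim to $\TA$. Corollary~\ref{mainSec5} (the isomorphism $\CA\cong\TA$) is then deduced \emph{afterwards}, by comparing cardinalities.

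You instead invert this order: you construct the isomorphism $\phi\colon\CA\to\TA$ directly, using the universal property of $\CA$ (Proposition~\ref{propXX}(i)--(v) matching (C1)--(C5)) for existence and surjectivity, and the product formula for $P^*(\omega_{\langle\alpha\rangle})$---which is exactly the content of Lemma~\ref{canonical}---to secure injectivity via non-triviality of the images of atoms. The atom correspondence then falls out from the isomorphism. Your approach is more self-contained, since it does not require the reader to inspect the proofs of \cite{FlGH20} and verify which hypotheses they actually use; it also delivers Corollary~\ref{mainSec5} as part of the argument rather than as a consequence. The paper's approach is terser but leans on external material. Note that your recursive computation of $P^*(t_k)$ is precisely Lemma~\ref{canonical}, which in the paper's ordering appears after Theorem~\ref{thmAtomsTA} but is logically independent of it.
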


\begin{proof}
Direct inspection on the proofs of Proposition 4.3 and Theorem 4.4 from \cite{FlGH20} where the authors proved that $\at(\CA)$ is the set of atoms of $\CA$, shows that the unique properties of $\CA$ needed in their proofs are the basic properties of Boolean algebras (in particular commutativity and associativity of $\wedge$ and distributivity), plus the following:
\begin{itemize}
\item If $a\leq b$, then for all $c$, $[(a\mid b\vee c)]\leq [(a\mid b)]$ (see part (ii) in the proof of \cite[Theorem 4.4]{FlGH20});
\item for all $a\neq \bot$, $[(a\mid a)]=1$ and $[(\bot\mid a)]=0$ (see part (b) in  the proof of \cite[Proposition 4.3]{FlGH20});
\item $[(a\mid b)]=\bigvee_{\alpha \in \at({\bf A}): \alpha \leq a}[(\alpha\mid b)]$; (see (a) in the proof of \cite[Proposition 4.3]{FlGH20}).
\end{itemize}
All these properties hold in $\TA$, whence the same proofs applies to this case.
%
%
\end{proof}

\begin{corollary}\label{mainSec5}
For every finite Boolean algebra ${\bf A}$, 
$\mathcal{T}({\bf A})$ is  isomorphic to $\mathcal{C}({\bf A})$.
\end{corollary}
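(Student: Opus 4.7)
The plan is to exhibit an explicit isomorphism $\phi\colon \mathcal{C}(\mathbf{A}) \to \mathcal{T}(\mathbf{A})$ using the fact that both algebras are quotients of essentially the same free Boolean algebra generated by the set of basic conditionals, and then to check injectivity via Theorem \ref{thmAtomsTA}. The heavy lifting has in fact already been done in Proposition \ref{propXX} and Theorem \ref{thmAtomsTA}, so the corollary should be almost formal.

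First, I would define a map $\psi\colon \free(A|A) \to \mathcal{T}(\mathbf{A})$ on generators by $\psi((a \mid b)) = [(a \mid b)]_\equiv$ and extend it uniquely to a Boolean algebra homomorphism (possible since $\mathcal{T}(\mathbf{A})$ is a Boolean algebra by Theorem \ref{main}). To show that $\psi$ factors through the quotient defining $\mathcal{C}(\mathbf{A})$, it suffices to verify that $\psi$ respects each of the generating congruences (C1)--(C5) of $\equiv_\mathfrak{C}$. But (C1)--(C5) correspond one-to-one with items (i)--(v) of Proposition \ref{propXX}, so this step is immediate and yields the desired homomorphism $\phi\colon \mathcal{C}(\mathbf{A}) \to \mathcal{T}(\mathbf{A})$, $\phi([t]_{\equiv_\mathfrak{C}}) = [t]_\equiv$.

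Next, $\phi$ is clearly surjective, since every element of $\mathcal{T}(\mathbf{A})$ is by definition the class $[t]_\equiv$ of some term $t \in \mathbb{T}(A)$ built from basic conditionals, and such $t$ represents an element of $\mathcal{C}(\mathbf{A})$ whose $\phi$-image is $[t]_\equiv$. For injectivity, since both algebras are finite, it is enough to show that $\phi$ is injective on atoms. By the description recalled from \cite{FlGH20}, every atom of $\mathcal{C}(\mathbf{A})$ has the form $\omega_{\langle \alpha \rangle}$ for some sequence $\langle \alpha \rangle$ of $n-1$ pairwise distinct atoms of $\mathbf{A}$. By Theorem \ref{thmAtomsTA}, $\phi(\omega_{\langle \alpha \rangle}) = [\omega_{\langle \alpha \rangle}]_\equiv$ is an atom of $\mathcal{T}(\mathbf{A})$, hence in particular non-zero. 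Given two distinct atoms $\omega_{\langle \alpha \rangle} \neq \omega_{\langle \alpha' \rangle}$ of $\mathcal{C}(\mathbf{A})$, their meet is $\bot$, so $\phi(\omega_{\langle \alpha \rangle}) \wedge^{*} \phi(\omega_{\langle \alpha' \rangle}) = 0$; since both images are atoms, they must be different.

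Thus $\phi$ restricts to a bijection between $\at(\mathcal{C}(\mathbf{A}))$ and $\at(\mathcal{T}(\mathbf{A}))$, and since any homomorphism between finite Boolean algebras that is bijective on atoms is an isomorphism, we conclude. The only conceivable obstacle would be if some atom-collapse or unexpected identification occurred in $\mathcal{T}(\mathbf{A})$, but Theorem \ref{thmAtomsTA} precisely rules this out by establishing the bijective correspondence between atoms, reducing the present corollary to bookkeeping.
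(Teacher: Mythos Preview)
Your proof is correct. The paper's own argument is the one-liner ``two finite Boolean algebras with the same cardinality are isomorphic,'' relying on Theorem~\ref{thmAtomsTA} only to count atoms. Your route is genuinely different in that it is constructive: you build the explicit quotient map $\phi\colon \mathcal{C}(\mathbf{A}) \to \mathcal{T}(\mathbf{A})$, $[t]_{\equiv_\mathfrak{C}} \mapsto [t]_\equiv$, verify well-definedness via the correspondence between (C1)--(C5) and Proposition~\ref{propXX}(i)--(v), and then use Theorem~\ref{thmAtomsTA} to establish bijectivity on atoms. This buys you something the paper's proof does not: the \emph{natural} isomorphism rather than an abstract existence statement, which is exactly what is needed later in Theorem~\ref{thmFinal} when one speaks of ``identifying the elements of $\mathcal{T}(\mathbf{A})$ with those of $\mathcal{C}(\mathbf{A})$.'' The paper's approach is shorter but leaves that identification implicit.
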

\begin{proof} Two Boolean algebras with the same cardinality are isomorphic.
\end{proof}

In \cite{FlGH20} it is proved that any (unconditional) positive probability $P$ on ${\bf A}$ canonically extends to a  positive probability $\mu_P$ on $\CA$ such that for every basic conditional $(a\mid b)$, $\mu_P(a\mid b)=P(a\wedge b)/P(b)$. 

 We can finally apply the latter result and the above Theorem \ref{thmAtomsTA} to show our final outcome, for which we still need a previous lemma on the factorization of the probability $P^*$ on the atoms of $\TA$, that is in fact a particular case of \cite[Theorem 18]{GiSa20}. For the reader's convenience, we  add its proof in the Appendix.
 
\begin{lemma} \label{canonical} 
Let ${\bf A}$ be a finite Boolean algebra with $\at({\bf A}) = \{\alpha_1, \ldots, \alpha_n\}$ and let $P$ be a conditional probability on $A\times A'$. Then, 

\begin{center}
$P^*((\alpha_1 \mid \top) {\land} (\alpha_2 \mid \neg \alpha_1) {\land} \ldots {\land} (\alpha_{n-1} \mid \alpha_{n-1} \lor \alpha_n)) = $ \\
$=P(\alpha_1) \cdot P (\alpha_2 \mid \neg \alpha_1) \cdot \ldots \cdot P(\alpha_{n-1} \mid \alpha_{n-1} \lor \alpha_n)$.
\end{center}
\end{lemma}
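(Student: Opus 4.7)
Setting $c_i := \neg(\alpha_1 \vee \ldots \vee \alpha_{i-1})$ for $i \geq 2$ and $c_1 := \top$, one has $c_{n-1} = \alpha_{n-1} \vee \alpha_n$ and the chain $c_1 \geq c_2 \geq \ldots \geq c_{n-1}$. The term appearing in the statement is then $t_1$, where I define
\[
t_k \;:=\; \bigwedge_{i=k}^{n-1}(\alpha_i \mid c_i), \qquad k = 1, \ldots, n-1,
\]
extended by the convention $t_n := 1$. The plan is to prove the one-step recurrence $P^*(t_k) = P(\alpha_k \mid c_k) \cdot P^*(t_{k+1})$ and then iterate it from $k = 1$ down to $k = n-1$, using that $P^*(t_n) = 1$ and that $P^*((\alpha_{n-1} \mid c_{n-1})) = P(\alpha_{n-1} \mid c_{n-1})$ already follows from Example \ref{firstex}.

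Establishing the recurrence is the computational core. First, because the $c_i$ form a descending chain, ${\bf b}(t_k) = c_k \vee \ldots \vee c_{n-1} = c_k$. Next, I compute the $w$-reduct $t_k^{\alpha_j}$ at each atom $\alpha_j$ of ${\bf A}$, using the observation that $\alpha_j \models c_i$ iff $j \geq i$. Three cases arise: for $j < k$, every conjunct of $t_k$ is void at $\alpha_j$, so no reduction applies and $t_k^{\alpha_j} = t_k$; for $j = k$, the conjunct $(\alpha_k \mid c_k)$ reduces to $1$ while the subsequent ones stay unchanged (being void at $\alpha_k$), so $t_k^{\alpha_k} = t_{k+1}$; and for $j > k$, the conjunct $(\alpha_k \mid c_k)$ reduces to $0$, collapsing the whole conjunction to $0$.

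Plugging these reducts into Definition \ref{xt} yields $X_{t_k}(\alpha_k) = \mathbb{P}(X_{t_{k+1}} \mid c_{k+1}) = P^*(t_{k+1})$ and $X_{t_k}(\alpha_j) = 0$ for $j > k$, while the values of $X_{t_k}$ at atoms with $j < k$ live outside $c_k$ and so are discarded by the conditional prevision $\mathbb{P}(X_{t_k} \mid c_k) = P^*(t_k)$. Consequently the defining sum for $P^*(t_k)$ collapses to the single nontrivial term $P^*(t_{k+1}) \cdot P(\alpha_k \mid c_k)$, which is exactly the claimed recurrence.

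The main obstacle, as I see it, is not a genuine difficulty but rather the bookkeeping of the symbolic reductions in a conjunction of varying length. Once one recognizes that the atoms $\alpha_1, \alpha_2, \ldots$ selectively peel off one factor at a time from $t_k$, while atoms with lower index leave $t_k$ unreduced yet do not contribute to the relevant conditional prevision, the chain-rule-like structure becomes transparent. This is essentially the compound probability theorem mentioned after Example \ref{exConjunction}, now instantiated inside $\TA$.
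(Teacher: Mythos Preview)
Your proof is correct and follows essentially the same approach as the paper's: both introduce the tail conjunctions $t_k$ (the paper calls them $A_i$), compute the random quantity $X_{t_k}$ by determining the reducts at each atom, obtain the one-step recurrence $P^*(t_k)=P(\alpha_k\mid c_k)\cdot P^*(t_{k+1})$, and iterate. The only difference is presentational---you spell out the reducts $t_k^{\alpha_j}$ case by case before reading off $X_{t_k}$, whereas the paper states the values of $X_{A_i}$ directly.
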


We observe that, if $P$ is a positive probability on ${\bf A}$, for each basic conditional $(a\mid b)$, $P^*(a\mid b)=P(a\wedge b)/P(b)=\mu_P(a\mid b)$. Our last result shows that $P^*$ and $\mu_P$ coincide on every compound conditional as well.
 
\begin{theorem}\label{thmFinal} Given a positive probability $P$ on ${\bf A}$ and any $t\in \mathbb{T}(A)$, $P^*(t)=\mu_P(t)$, once we identify the elements of $\TA$ with those of $\CA$.
\end{theorem}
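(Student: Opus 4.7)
The plan is to exploit the fact that, after the identification given by Corollary \ref{mainSec5}, both $P^*$ and $\mu_P$ are probability measures on the same finite Boolean algebra. Since any two probabilities on a finite Boolean algebra coincide as soon as they agree on atoms, it suffices to verify the equality $P^*(t)=\mu_P(t)$ on the atoms of $\TA \cong \CA$.

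By Theorem \ref{thmAtomsTA}, every atom of $\TA$ has the form $[\omega_{\langle \alpha \rangle}]$ for some sequence $\langle \alpha \rangle = \langle \alpha_1,\ldots,\alpha_{n-1}\rangle$ of pairwise distinct atoms of ${\bf A}$, where
\[
\omega_{\langle \alpha \rangle} = (\alpha_1 \mid \top) \land (\alpha_2 \mid \neg \alpha_1) \land \cdots \land (\alpha_{n-1} \mid \alpha_{n-1} \lor \alpha_n).
\]
The first step is then to apply Lemma \ref{canonical} to obtain, for every such sequence,
\[
P^*(\omega_{\langle \alpha \rangle}) = P(\alpha_1) \cdot P(\alpha_2 \mid \neg \alpha_1) \cdots P(\alpha_{n-1} \mid \alpha_{n-1} \lor \alpha_n).
\]

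The second step is to invoke the corresponding factorization of $\mu_P$ on atoms of $\CA$ established in \cite{FlGH20}. Since $\mu_P$ is characterized there as the unique probability extending $P$ on basic conditionals via $\mu_P(a\mid b) = P(a\land b)/P(b)$, and since the congruence rules (C1)--(C5) (in particular the chain rule (C5)) force the value on an atom $\omega_{\langle \alpha\rangle}$ to be the product of the factors $\mu_P(\alpha_i \mid \alpha_i \lor \alpha_{i+1} \lor \cdots \lor \alpha_n) = P(\alpha_i)/P(\alpha_i \lor \cdots \lor \alpha_n)$, one obtains exactly the same product as above. Hence $P^*(\omega_{\langle\alpha\rangle}) = \mu_P(\omega_{\langle\alpha\rangle})$ for each atom.

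Finally, because $\TA$ is finite, any element $[t]$ is the join of the atoms below it, so finite additivity of $P^*$ (Corollary \ref{propprob}) and of $\mu_P$ yields $P^*(t) = \mu_P(t)$ for all $t \in \mathbb{T}(A)$. The main delicate point is the second step: matching the factorization of $\mu_P$ on atoms of $\CA$ (as given in \cite{FlGH20}) with the one provided by Lemma \ref{canonical} for $P^*$. Once that identification is made explicit, the rest reduces to the elementary observation that agreement on atoms determines a probability on a finite Boolean algebra.
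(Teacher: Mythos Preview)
Your proposal is correct and follows essentially the same approach as the paper: show that $P^*$ and $\mu_P$ agree on the atoms of $\TA\cong\CA$ via Lemma~\ref{canonical} (and the corresponding factorization of $\mu_P$ from \cite{FlGH20}), and then conclude by the fact that a probability on a finite Boolean algebra is determined by its values on atoms. The paper's own proof is just the one-line version of what you wrote.
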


\begin{proof} By Lemma \ref{canonical}, $P^*$ coincides with $\mu_P$ on the atoms of $\mathcal{T}({\bf A})$, and hence on the whole algebra. 
\end{proof}

\section{Conclusions}\label{SecConclusion}

In the present paper we have put forward an investigation on compound conditionals that, starting from the original setting proposed by de Finetti, aims at representing them in terms of conditional random quantities. Technically speaking, we start by a finite Boolean algebra ${\bf A}$ of events and a (coherent) conditional probability $P$ on $A\times A'$, where $A'=A\setminus\{\bot\}$. Then, to each term $t$ written in the language having as variables  basic conditionals of the form $(a\mid b)$ (for $a\in A$ and $b\in A'$),  we first consider,  for each interpretation $w\in \Omega$ the reduct $t^w$ of $t$, and then we associate to $t$ a conditional random quantity $X_t:\Omega\to[0,1]$, that assigns to each $w\in \Omega$ the value $X_t(w)$ given by the conditional prevision $\mathbb{P}(X_{t^w} \mid {\bf b}(t^w))$. 

By doing this, we have presented a natural and uniform procedure to interpret compound conditionals as random quantities and we have investigated the numerical and logical properties of such representation for compound conditionals via their associated random quantities. Our main contribution concerns  the possibility of defining operations among conditionals by an iterative procedure. Furthermore, we have proved that these operations allow us to regard the set of those numerical representations as a Boolean algebra $\TA$. This latter result provides in turn a numerical counterpart of the construction explored in \cite{FlGH20}, where the authors showed that compound symbolic conditionals, satisfying certain identities, can be endowed with a suitable Boolean algebra structure $\CA$. In particular, we have shown that the two algebraic structures that arise from these numerical and the symbolic representations of conditionals turn out to be isomorphic. Moreover, any conditional probability $P$ on $A\times A'$ extends in the same way to both algebras $\TA$ and $\CA$.

As for future work we plan to generalize Theorem \ref{thmFinal} to the case in which $P$ is a conditional probability on $A\times A'$. Moreover, we aim at studying the extension of the random quantity-based approach to compound conditionals developed in this paper to deal with general forms of iterated conditionals, see e.g. \cite{SGOP20,GiSa21E}. Finally, we plan to study possible applications of compound conditionals to non-monotonic reasoning from conditional bases and to conditional logics more in general,  
in the line of possible interrelationships with different areas as discussed in \cite{Dagstuhl19}. Another interesting area of application of compound conditionals to be further explored is the area of  psychology of uncertain reasoning \cite{OVER20}.

\subsection*{Acknowledgments}  Flaminio and Godo acknowledge partial support by the MOSAIC project (EU H2020-MSCA-RISE-2020 Project 101007627) and also  by the Spanish project PID2019-111544GB-C21, funded by  MCIN/AEI/10.13039/501100011033. Sanfilippo acknowledges support by the FFR2020-FFR2021 projects of University of Palermo, Italy.

\section*{Appendix}

\subsection*{A1. Axiomatic conditional probability}
Given an algebra  of  events ${\bf A}$ and a non empty subfamily
 $B$ of $A'$, where  $A'=A\setminus \{\bot\}$, a (finitely-additive) \emph{conditional  probability}
   on $A\times B$ is a real-valued function  $P$ defined on $A\times B$ satisfying the following   properties, see, e.g., \cite{Renyi55,Csaszar55,Popper59,Dubins75,Holz85,Rega85,Cole94,coletti02}:
\renewcommand{\labelenumii}{\alph{enumi}}

 \begin{enumerate}
 \item   $P(\cdot|b)$ is a finitely additive probability on ${\bf A}$, for every $b\in B$;
 \item $P(b|b)$=1, for every $b\in B$;
 \item $P(ab|c) = P(a|bc)P(b|c)$,  for every $a\in A$ and  $b,c\in B$.
 \end{enumerate}
Notice that, when $c=\top$,  from 3 it follows that $P(ab)=P(a|b)P(b)$, where $P(\cdot)=P(\cdot|\top)$. In  particular, $P(a|b)=P(ab)/P(b)$, when $P(b)>0$.
 
\subsection*{A2. Proof of Propostion 4.4} 

\noindent{\bf Proposition 4.4.}  {\em  For every $t,s,r \in \mathbb{T}(A)$ the following conditions hold:
\begin{enumerate}
\item $X_t = X_{t {\land} t}$
\item $X_{t {\land} s} = X_{s {\land} t}$
\item  $X_{t {\land} (s {\land} r)} = X_{(t {\land} s) {\land} r}$
\item $X_{t {\land} \neg t} = 0$.
\item  $X_{\neg(t {\land} s)} = X_{\neg t {\lor} \neg s}$
\item $X_{t {\lor} (s {\land} r)} = X_{(t {\land} s) {\lor} (t {\lor} r)}$
\item $X_{t {\lor} s} = X_t + X_s - X_{t {\land} s}$
\item $X_{\neg t} = 1 - X_t$ and hence $X_{\neg\neg t}=X_t$
\item If $a\leq b$, then  $X_{(a\mid b)\wedge (a\mid b\vee c)}=X_{a\mid b\vee c}$.
\end{enumerate}
}

\begin{proof}
The proofs of the items (1)-(8) are by induction on the complexity of the terms $t,s,r$ appearing in the expressions. We will show the proofs of (1), (2), (3), (4),  (8) and the basic case of (7) to exemplify the general idea. Also notice that, by Lemma 4.1, to prove  an identity $X_t=X_s$ where ${\bf b}(t)={\bf b}(s)$ it is enough to check that $X_t(w)=X_s(w)$ for all $w\in {\bf b}(t)$. \vspace{.1cm}

\noindent (1) $X_t = X_{t {\land} t}$. 
Note that ${\bf b}(t) = {\bf b}(t {\land} t)$. We proceed by induction on the complexity of $t$: 
\begin{itemize}
\item Let $t = (a \mid b)$, then one can check that $(a \mid b)^w = ((a \mid b){\land}(a \mid b))^w \in  \{1, 0\}$ for every $w \models b$. Hence $X_{(a \mid b)} = X_{(a \mid b) {\land} (a \mid b)}$

\item Assume $X_{s {\land} s} = X_{s}$  for every $s \in Red^0(t)$ ands we check that $X_{t^w} = X_{(t {\land} t)^w}$ for every $w \in \at({\bf A})$ such that $w \models {\bf b}(t)$. But $t^w \in Red^0(t)$, hence by the inductive hypothesis, we have $X_{(t {\land} t)^w} =  X_{t^w {\land} t^w} = X_{t^w}$, hence $X_t = X_{t {\land} t}$. 
\end{itemize}

\noindent (2) $X_{t {\land} s} = X_{s {\land} t}$. 

\begin{itemize}
\item First we check the condition for basic conditionals $t = (a \mid b)$ and $s = (c \mid d)$ by computing their conjunctions as in Example 3.8.
\item Then, assume $X_{t' {\land} s'} = X_{s' {\land} t'}$ for every $t', s'$ such that $t' {\land} s' \in Red^0(t {\land} s)$. Then, again use Lemma 4.1 and show $X_{(t {\land} s)^w} = X_{(s {\land} t)^w}$ for every $w \in {\bf b}(t {\land} s) = {\bf b}(s {\land} t)$, but $X_{(t {\land} s)^w} = X_{t^w {\land} s^w} =  X_{s^w {\land} t^w}  = X_{(s {\land} t)^w}$.  
\end{itemize}

\noindent (3)  $X_{t {\land} (s {\land} r)} = X_{(t {\land} s) {\land} r}$. The proof is analogous to the previous one: 
\begin{itemize}
\item First check that the condition for basic conditionals $t = (a \mid b)$, $s = (c \mid d)$ and $r = (e \mid f)$. Indeed, $((a \mid b) \land ( (c \mid d) \land (e \mid f)))^w = 
(((a \mid b) \land (c \mid d)) \land (e \mid f))^w \in \{(a \mid b) \land (c \mid d), (a \mid b) \land (e \mid f),(c \mid d) \land (e \mid f), (a \mid b), (c \mid d), (e \mid f), 1, 0\}$ for every $w \models b {\lor} d {\lor} f$. Hence, $X_{(a \mid b) \land ( (c \mid d) \land (e \mid f))} = X_{((a \mid b) \land (c \mid d)) \land (e \mid f)}$.

\item Then, assume  $X_{t' {\land} (s' {\land} r')} = X_{(t' {\land} s') {\land} r'}$ for every $t' \land ( s' \land r') \in Red^0(t {\land} (s{\land} r))$. Then, again use Lemma 4.1 and show $X_{(t {\land} (s {\land} r))^w} = X_{((t {\land} s) {\land} t)^w}$ for every $w \in {\bf b}(t {\land} (s {\land} r)) = {\bf b}((t {\land} s) {\land} t)$.  But $X_{(t {\land} (s {\land} r))^w} = X_{t^w {\land} (s^w {\land} r^w)} =  X_{(t^w {\land} s^w ){\land} r^w}  = X_{(t {\land} s) {\land} r)^w}$.  
\end{itemize}


\noindent (4) $X_{t {\land} \neg t} = 0$. Clearly, $X_{(a \mid b) {\land} \neg (a \mid b)}(w) = 0$ for every $w \models b$. If $t$ is arbitrary, by the inductive hypothesis, we have $X_{(t {\land} \neg t)^w} =  X_{t^w {\land} \neg t^w} = 0$, and thus  $X_{t {\land} \neg t} = 0$ as well.

\vspace{.1cm}




 




\noindent (7) $X_{t {\lor} s} = X_t + X_s - X_{t {\land} s}$. 
As usual, first we check the condition for basic conditionals $t = (a \mid b)$ and $s = (c \mid d)$. Recalling how $X_{(a \mid b) {\land} (c \mid d)}(w)$ is defined in Example 3.8, 
a simple check gives;
\begin{center}
$X_{(a \mid b)}(w) + X_{(c \mid d)}(w) - X_{(a \mid b) {\land} (c \mid d)}(w) = $ \\  \mbox{} \\
$\left \{
\begin{array}{ll}
1, & \mbox{if } w \models a  b \lor c  d \\
0, & \mbox{if } w \models \bar a  b  \bar c  d  \\
{P}(a \mid b), & \mbox{if } w \models \bar b \bar c  d\\
{P}(c \mid d), & \mbox{if } w \models \bar a  b  \bar d \\[0.1cm] \hdashline  \\[-0.3cm]
{P}(a \mid b) + {P}(c \mid d) - \\ 
\mathbb{P}^c((a \mid b) {\land} (c \mid d)), & \mbox{if } w \models \bar b  \bar d \\
\end{array}
\right .
$
\end{center}
while:
$$X_{(a \mid b) {\lor} (c \mid d)}(w) = \left \{
\begin{array}{ll}
1, & \mbox{if } w \models a  b \lor c  d \\
0, & \mbox{if } w \models \bar a  b \bar c d \\
{P}(a \mid b), & \mbox{if } w \models \bar b  \bar c  d\\
{P}(c \mid d), & \mbox{if } w \models \bar a  b  \bar d \\[0.1cm] \hdashline  \\[-0.3cm]
\mathbb{P}^c((a \mid b) {\lor} (c \mid d)), & \mbox{if } w \models \bar b  \bar d \\
\end{array}
\right .
$$
Therefore  $X_{(a \mid b)} + X_{(c \mid d)} - X_{(a \mid b) {\land} (c \mid d)} = X_{(a \mid b) {\lor} (c \mid d)}$ over $b \lor d$, then it coincides over the whole $\at({\bf A})$.

 \vspace{.1cm}
 
\noindent 
(8) Again by induction on $t$. Notice that the case $t=(a\mid b)$ has been already proved in Example 3.7. Then, assume $X_{\neg t'} = 1 - X_{t'}$ for every $t'$ less complex than $t$. Note that ${\bf b}(\neg t') = {\bf b}(t')$. Then, if $w \models {\bf b}(t)$, by definition,  
$X_{\neg t}(w) = \mathbb{P}(X_{(\neg t)^w} \mid {\bf b}(t^w))$, and since we can assume that $t^w$ is strictly less complex than $t$, by the induction hypothesis, $\mathbb{P}(X_{(\neg t)^w} \mid {\bf b}(t^w)) =  \mathbb{P}(1-X_{t^w} \mid {\bf b}(t^w)) = 1 -  \mathbb{P}(X_{t^w} \mid {\bf b}(t^w)) = 1-X_{t}(w)$. 

 \vspace{.1cm}
 
\noindent
(9) The claim follows by direct computation of the random quantities $X_{(a\mid b)\wedge (a\mid b\vee c)}$ and of $X_{(a\mid b\vee c)}$, when $a \leq b$, by using the generic definition  of the random quantity $X_{t\wedge s}$ for any $t$ and $s$ as given in Definition 3.5.
\end{proof}

\subsection*{A3. Proof of Lemma 6.3} 
 
\noindent {\bf Lemma 6.3.} {\em 
Let ${\bf A}$ be a finite Boolean algebra with $\at({\bf A}) = \{\alpha_1, \ldots, \alpha_n\}$ and let $P$ be a conditional probability on $A\times A'$. Then, 

\begin{center}
$P^*((\alpha_1 \mid \top) {\land} (\alpha_2 \mid \neg \alpha_1) {\land} \ldots {\land} (\alpha_{n-1} \mid \alpha_{n-1} \lor \alpha_n)) = $ \\
$P(\alpha_1) \cdot P (\alpha_2 \mid \neg \alpha_1) \cdot \ldots \cdot P(\alpha_{n-1} \mid \alpha_{n-1} \lor \alpha_n)$.
\end{center}
}

\begin{proof} Let $A_i = (\alpha_i \mid \alpha_i {\lor} \ldots\vee \alpha_{n}) {\land} \ldots {\land} (\alpha_{n-1} \mid \alpha_{n-1} {\lor} \alpha_n)$. We will prove first that, for every $1 \leq i \leq n-1$, the following equality holds: 
$$
\begin{array}{ll} 
P^*(A_i) = 
P(\alpha_i \mid  \alpha_i {\lor} ... {\lor} \alpha_n) \cdot P^*(A_{i+1})
\end{array}
$$
Indeed, one can check that the random quantity $X_{A_i}$ is defined as follows:
$$X_{A_i}(w) =
\left \{ \begin{array}{ll}
 P^*(A_i),  &   \mbox{if } w \models \alpha_1 {\lor} \cdots {\lor} \alpha_{i-1}, \\
 P^*(A_{i+1}), & \mbox{if } w \models \alpha_i, \\
 0,  & \mbox{if } w \models \alpha_{i+1}\vee \cdots \vee \alpha_n .
\end{array} \right .
$$
Hence, since ${\bf b}(A_i) = \alpha_i \lor \ldots \lor \alpha_n$, we have 
\[
P^*(A_i)=\prev^c(X_{A_i}) = P^*(A_{i+1}) \cdot P(\alpha_i \mid \alpha_i \lor \ldots \lor \alpha_n).
\]
Now, by repeatedly applying this equality, we finally have: 
$$
\begin{array}{ll}
P^*(A_1) =  
P(\alpha_1) \cdot P^*(A_2) = \\=  P(\alpha_1) \cdot P (\alpha_2 \mid \neg \alpha_1) \cdot P^*(A_3) = \cdots = \\= P(\alpha_1) \cdot P (\alpha_2 \mid \neg \alpha_1) \cdot \ldots \cdot P(\alpha_{n-1} \mid \alpha_{n-1} \lor \alpha_n).
\end{array}
$$
Then the proposition follows just by observing that $A_1 = (\alpha_1 \mid \top) {\land} (\alpha_2 \mid \neg \alpha_1) {\land} \ldots {\land} (\alpha_{n-1} \mid \alpha_{n-1} \lor \alpha_n)$. 
\end{proof}


\end{document}